\numberwithin{equation}{section}
\title{\bf Linear-Quadratic Mean Field Games \\with Common Noise: A Direct Approach \thanks{This work is supported by National Key R$\&$D Program of China (2022YFA1006104), National Natural Science Foundations of China (12471419, 12271304, 62192753), and Shandong Provincial Natural Science Foundations (ZR2024ZD35, ZR2022JQ01, ZR2022JQ31).}}
\author{\normalsize  Wenyu Cong\thanks{\it School of Mathematics, Shandong University, Jinan 250100, P.R. China, E-mail: congwenyu@mail.sdu.edu.cn} , Jingtao Shi\thanks{\it Corresponding author. School of Mathematics, Shandong University, Jinan 250100, P.R. China, E-mail: shijingtao@sdu.edu.cn}, Bingchang Wang\thanks{\it School of Control Science and Engineering, Shandong University, Jinan 250061, P.R. China, E-mail: bcwang@sdu.edu.cn}}
\newtheorem{mypro}{Proposition}[section]
\newtheorem{mythm}{Theorem}[section]
\newtheorem{mydef}{Definition}[section]
\newtheorem{Remark}{Remark}[section]
\newtheorem{example}{Example}[section]
\begin{document}

    \maketitle

    \noindent{\bf Abstract:}\quad
    This paper investigates a linear-quadratic mean field games problem with common noise, where the drift term and diffusion term of individual state equations are coupled with both the state, control, and mean field terms of the state, and we adopt the direct approach to tackle this problem.
    Compared with addressing the corresponding mean field teams problem, the mean field games problem with state coupling presents greater challenges.
    This is not only reflected in the explosive increase in the number of adjoint equations when applying variational analysis but also in the need for more Riccati equations during decoupling the high-dimensional forward-backward stochastic differential equations system.
    We take a different set of steps and ingeniously utilize the inherent properties of the equations to address this challenge.
    First, we solve an $N$-player games problem within a vast and finite population setting, and obtain a set of forward-backward stochastic differential equations by variational analysis.
    Then, we derive the limiting forward-backward stochastic differential equations by taking the limit as $N$ approaches infinity and applying the law of large numbers.
    Based on the existence and uniqueness of solutions to backward stochastic differential equations, some variables in the equations are identically zero, which significantly reduces the complexity of the analysis.
    This allows us to introduce just two Riccati equations to explicitly construct decentralized strategies for all participants.
    Moreover, we demonstrate that the constructed decentralized strategies constitute an $\epsilon$-Nash equilibrium strategy for the original problem.
    We also extend the results to the infinite-horizon case and analyze the solvability of algebraic Riccati equations.
    Finally, numerical simulations are provided to illustrate the preceding conclusions.

    \vspace{2mm}

    \noindent{\bf Keywords:}\quad Mean field games, direct approach, linear-quadratic optimal control, forward-backward stochastic differential equation, $\epsilon$-Nash equilibrium, common noise

    \vspace{2mm}

    \noindent{\bf Mathematics Subject Classification:}\quad 93E20, 60H10, 49K45, 49N70, 91A23

    \section{Introduction}

    {\it Mean field games} (MFGs) have garnered increasing academic interest and have been applied in numerous areas, including economics, smart grids, engineering, and social sciences (\cite{Bensoussan-Frehse-Yam-13}, \cite{Gomes-Saude-14}, \cite{Caines-Huang-Malhame-17}, \cite{Carmona-Delarue-18}).
    MFGs theory offers a framework for understanding the behavior of large-population models.
    In these models, while individual entities have negligible influence, the overall population's impact is significant.
    The theory explores the existence of Nash equilibria by studying the connection between finite-population and infinite-population limit problems.

    The methodological principles of MFGs were first introduced by Lasry and Lions \cite{Lasry-Lions-07} and independently by Huang et al. \cite{Huang-Malhame-Caines-06}.
    They have proven to be effective and manageable for analyzing weakly coupled stochastic controlled systems with mean field interactions, and for establishing approximate Nash equilibria.
    In particular, within the {\it linear-quadratic} (LQ) framework, MFGs provide a flexible modeling tool that can be applied to a wide range of practical problems.
    LQ-MFGs offer solutions with notable and elegant properties.
    Current academic research has extensively explored MFGs, especially within the LQ framework (\cite{Li-Zhang-08}, \cite{Bensoussan-Sung-Yam-Yung-16}, \cite{Huang-Wang-Wu-16}, \cite{Moon-Basar-17}, \cite{Huang-Zhou-20}, \cite{Xu-Zhang-20}, \cite{Wang-Zhang-Zhang-20}, \cite{Bensoussan-Feng-Huang-21}).
    In \cite{Cong-Shi-25}, the authors investigate indefinite LQ-MFGs in stochastic large-population systems. They employ the direct approach to address this problem and demonstrate that the resulting decentralized strategy serves as an $\varepsilon$-Nash equilibrium.
    \cite{Cong-Shi-24} explores LQ Stackelberg MFGs for backward-forward stochastic systems, which include a backward leader and numerous forward followers.
    \cite{Cong-Shi-Wang-24} examines LQ Stackelberg MFGs and {\it mean field teams} (MFTs) with any population sizes, where the followers' game is divided into non-cooperative and cooperative types, with follower number being either finite or infinite.

    Two primary approaches are traditionally used to solve MFGs.
    One is the fixed-point approach (or top-down approach, NCE approach, see \cite{Huang-Malhame-Caines-06}, \cite{Huang-Caines-Malhame-07}, \cite{Li-Zhang-08}, \cite{Bensoussan-Frehse-Yam-13}, \cite{Carmona-Delarue-18}, \cite{Wang-Huang-19}), which begins by using mean field approximation to formulate a fixed-point equation.
    Decentralized strategies can be developed by solving this equation and analyzing a representative player's optimal response.
    The other is the direct approach (or bottom-up approach, refer to \cite{Lasry-Lions-07}, \cite{Wang-Zhang-Zhang-20}, \cite{Huang-Zhou-20}, \cite{Wang-Zhang-Zhang-22}, \cite{Wang-24}, \cite{Cong-Shi-24}, \cite{Cong-Shi-25}, \cite{Wang-25}, \cite{Wang-Xu-Zhang-Liang-25}, \cite{Liang-Wang-Zhang-25}), which starts by solving an $N$-player games problem in a large-population setting.
    Then, centralized control depending on a specific player's state and the population's average state can be derived by decoupling or reducing high-dimensional systems.
    As the population size $N$ tends to infinity, constructing decentralized strategies becomes feasible.
    In \cite{Huang-Zhou-20}, the authors explored the connection and difference between these two approaches in an LQ setting.
    In passing, the de-aggregation approach (see \cite{Wang-Zhang-Zhang-22}, \cite{Wang-Zhang-Fu-Liang-23}, \cite{Liang-Wang-Zhang-24}, \cite{Cong-Shi-Wang-24}), evolving from the direct approach, takes the conditional expectation with respect to the information filter adapted to the $i$th agent's decentralized control set and de-aggregates the mean field term to obtain a decentralized strategy.
    These strategies are exact Nash equilibria for games with any population sizes, differing from the asymptotic optimality of the fixed-point and direct approaches for large populations.

    In this paper, we explore LQ-MFGs with common noise, where individual state equations are coupled with the population's average state.
    Initially, we resolve an $N$-player games problem in a large-population setting via variational analysis, deriving a set of {\it forward-backward stochastic differential equations} (FBSDEs).
    Then, we can sidestep the complex decoupling procedure and only need to introduce two Riccati equations by taking the limit as $N \to \infty$ and employing the law of large numbers.
    This allows us to explicitly construct decentralized strategies for all participants in both finite-horizon and infinite-horizon scenarios, overcoming the complexity of high-dimensional Hamiltonian systems caused by the coupling in the state equation.
    We demonstrate that the constructed decentralized strategies constitute an $\epsilon$-Nash equilibrium for the original problem.

    The key contributions of this paper are summarized as follows.

    $\bullet$ Motivated by prior work \cite{Huang-Zhou-20}, \cite{Liang-Wang-Zhang-25}, \cite{Wang-25}, we further consider a model with common noise in the state equation, coupled with the population's average state.
    The overall model is more general.
    We also analyze the corresponding infinite-horizon problem.

    $\bullet$ Diverging from conventional fixed-point methods like \cite{Xu-Zhang-20}, we adopt a direct approach from \cite{Huang-Zhou-20}, \cite{Wang-Zhang-Zhang-20} to reduce the complexitiy of our games problem.
    This involves decoupling the high-dimensional Hamiltonian system via mean field approximations to formulate decentralized strategies for all players.
    These strategies are later proven to form an $\epsilon$-Nash equilibrium.

    $\bullet$ Compared with the MFTs problem in \cite{Wang-Zhang-Zhang-22}, MFGs problem exhibit greater complexity in terms of coupling relationships and present more significant challenges in terms of processing.
    In MTFs problem, all agents collaborate to optimize a common performance index, resulting in a relatively straightforward optimization problem.
    However, in MFGs problem, agents act independently and make decisions based on their own interests.
    This leads to a complex interaction between individual optimal strategies and collective optimal strategies.

    $\bullet$
    Unlike the cumbersome procedure in \cite{Si-Shi-25}, we derive the limiting FBSDEs by taking the limit as $N$ approaches infinity and applying the law of large numbers.
    Notably, some variables in these equations are identically zero, which drastically reduces their dimensionality.
    This allows us to introduce just two Riccati equations instead of four, like dealing with MFTs problem, to explicitly construct decentralized strategies for all participants, thereby effectively overcoming the complexity of high-dimensional Hamiltonian systems that arises from the coupling in the state equation.

    The paper is structured as follows.
    Section 2 establishes the problem formulation.
    In Section 3, we first investigate the finite-horizon problem and then extend the results to the infinite-horizon case.
    In Section 4, numerical examples are given to validate the proposed strategies.
    Finally, conclusions are presented in Section 5.

    The following notations will be used throughout this paper.
    We use $||\cdot||$ to denote the norm of a Euclidean space, or the Frobenius norm for matrices.
    For a symmetric matrix $Q$ and a vector $z$, $||z||^2_Q \equiv z^\top Qz$.
    We use $c$ to denote a positive constant independent of the population size, and maybe different from line to line.
    For any real-valued scalar functions $f(\cdot)$ and $g(\cdot)$ defined on $\mathbb{R}$, $f(x) = O(g(x))$ means that there exists a constant $c > 0$ such that $\lim_{x \to \infty} |\frac{f(x)}{g(x)}| = c$, where $|\cdot|$ is an absolute value, which is also equivalent to saying that there exist $c > 0$ and $x$ such that $|f(x)| \le c|g(x)|$ for any $x \ge x'$.
    $\mathcal{R}(\cdot)$ denotes the range of a matrix or a vector.
    Let $T>0$ be a finite time duration and $(\Omega, \mathcal{F}, \{\mathcal{H}_t\}_{0 \le t \le T}, \mathbb{P})$ be a complete filtered probability space with the filtration $\{\mathcal{H}_t\}_{0 \le t \le T}$ augmented by all the $\mathbb{P}$-null sets in $\mathcal{F}$. $\mathbb{E}$ denoted the expectation with respect to $\mathbb{P}$. Let $L_{\mathcal{H}}^2(0,T;\cdot)$ be the set of all vector-valued (or matrix-valued) $\mathcal{H}_t$-adapted processes $f(\cdot)$ such that $\mathbb{E}\int^T_0 ||f(t)||^2dt < \infty$ and $L^2_{\mathcal{H}_t}(\Omega;\cdot)$ be the set of $\mathcal{H}_t$-measurable random variables, for $t\in[0,T]$.

    \section{Problem formulation}

    We consider a large-population system with $N$ homogeneous agents.
    Since the dynamics and cost functionals of these agents remain unchanged regardless of the indexing applied, the system exhibits symmetry, and the players are exchangeable.
    Here, $N$ can be arbitrarily large.

    For the sake of clarity and coherence of this paper, we will first present the problem setup for the finite-horizon. The infinite-horizon problem will be detailed in the corresponding section.

    The states equation of the $i$th agent, is given by the following SDE:
    \begin{equation}\label{state}
        \left\{
        \begin{aligned}
            dx_i(t) =& \big[A(t)x_i(t) + B(t)u_i(t) + G(t)x^{(N)}(t) + f(t)\big]dt\\
                     & + \big[C(t)x_i(t) + D(t)u_i(t) + F(t)x^{(N)}(t) + \sigma(t)\big]dW_i(t)\\
                     & + \big[C_0(t)x_i(t) + D_0(t)u_i(t) + F_0(t)x^{(N)}(t) + \sigma_0(t)\big]dW_0(t),\quad t\in[0,T],\\
            x_i(0) =&\ \xi_i ,
        \end{aligned}
        \right.
    \end{equation}
    where $1 \le i \le N$, $x_i \in \mathbb{R}^n$, $u_i \in \mathbb{R}^r$ and $\xi_i$ are the state process, control process and initial value of the $i$th agent, $x^{(N)}(t) := \frac{1}{N} \sum_{j=1}^{N} x_j(t)$ is called the state average or mean field term of all agents.
    $W_i(t), i = 0,\cdots,N$ are a sequence of independent one-dimensional Brownian motions defined on $(\Omega, \mathcal{F}, \{\mathcal{F}_t\}_{0 \le t \le T}, \mathbb{P})$.
    Here, $A(\cdot)$, $B(\cdot)$, $G(\cdot)$, $C(\cdot)$, $D(\cdot)$, $F(\cdot)$, $C_0(\cdot)$, $D_0(\cdot)$ and $F_0(\cdot)$ are deterministic matrix-valued functions with compatible dimensions.
    $f(\cdot)$, $\sigma(\cdot)$ and $\sigma_0(\cdot)$ are $\mathcal{F}_t^0$-adapted $\mathbb{R}^n$ vector-valued processes, reflecting the impact of the environment on each agent, where $\mathcal{F}_t^0$ is the $\sigma$-algebra generated by the common noise $W_0(\cdot)$.

    The cost functional of the $i$th agent is given by
    \begin{equation}\label{finite cost}
    \begin{aligned}
        J^N_i(u_i(\cdot), u_{-i}(\cdot)) &= \mathbb{E} \bigg\{\int_0^T \left[\big|\big|x_i(t) - \varGamma(t) x^{(N)}(t) - \eta(t)\big|\big|^2_{Q(t)} + \big|\big|u_i(t)\big|\big|^2_{R(t)} \right]dt \\
            &\qquad\qquad + \big|\big|x_i(T) - \varGamma_0 x^{(N)}(T) - \eta_0\big|\big|^2_H\bigg\},
    \end{aligned}
    \end{equation}
    where $Q(\cdot)$, $R(\cdot)$ and $\varGamma(\cdot)$ are deterministic matrix-valued functions with compatible dimensions, $H$ and $\varGamma_0$ are deterministic matrices, $\eta(\cdot)$ is an $\mathcal{F}_t^0$-adapted $\mathbb{R}^n$ vector-valued process, $\eta_0$ is an $\mathcal{F}_T^0$-measurable $\mathbb{R}^n$ vector-valued random variable, and $Q(\cdot)$, $R(\cdot)$, $H(\cdot)$ are symmetric.
    $u_{-i}(\cdot) := (u_1(\cdot),\cdots,u_{i-1}(\cdot),u_{i+1}(\cdot),\cdots,u_N(\cdot))$.

    The following remarks demonstrate the reasonableness of the model setup.

    \begin{Remark}
        In particular, for problems rooted in economics, it is usually common to discount future losses.
        This provides an accurate measure of the present value of all future losses.
        In such scenarios, the $i$th player, $1 \le i \le N$, is assumed to aim at minimizing the cost functional
        \begin{equation*}
            \begin{aligned}
                J^N_i(v_i(\cdot), v_{-i}(\cdot)) &= \mathbb{E} \bigg\{\int_0^T e^{-\rho t}\left[\big|\big|y_i(t) - \varGamma(t) y^{(N)}(t) - \tilde{\eta}(t)\big|\big|^2_{Q(t)} + \big|\big|v_i(t)\big|\big|^2_{R(t)} \right]dt\\
                &\qquad\qquad + e^{-\rho T}\big|\big|y_i(T) - \varGamma_0 y^{(N)}(T) - \tilde{\eta}_0\big|\big|^2_H\bigg\},
            \end{aligned}
        \end{equation*}
        where $\rho$ being the discount factor, subjecting to the state equation
        \begin{equation*}
            \left\{
            \begin{aligned}
                dy_i(t) =& \big[A(t)y_i(t) + B(t)v_i(t) + G(t)y^{(N)}(t) + \tilde{f}(t)\big]dt\\
                 & + \big[C(t)y_i(t) + D(t)v_i(t) + F(t)y^{(N)}(t) + \tilde{\sigma}(t)\big]dW_i(t)\\
                 & + \big[C_0(t)y_i(t) + D_0(t)v_i(t) + F_0(t)y^{(N)}(t) + \tilde{\sigma}_0(t)\big]dW_0(t),\\
                y_i(0) =&\ \xi_i .
            \end{aligned}
            \right.
        \end{equation*}
        Introducing $x_i(t) := e^{-\frac{1}{2}\rho t} y_i(t)$, $u_i(t) := e^{-\frac{1}{2}\rho t} v_i(t)$, $1 \le i \le N$, $\eta(t) := e^{-\frac{1}{2}\rho t} \tilde{\eta}(t)$, $f(t) := e^{-\frac{1}{2}\rho t} \tilde{f}(t)$, $\sigma(t) := e^{-\frac{1}{2}\rho t} \tilde{\sigma}(t)$, $\eta_0 := e^{-\frac{1}{2}\rho T} \tilde{\eta}_0$, this problem can be rephrased as
        \begin{equation*}
            \begin{aligned}
                J^N_i(v_i(\cdot), v_{-i}(\cdot)) &= \mathbb{E} \bigg\{\int_0^T \Big[\big|\big|e^{-\frac{1}{2}\rho t} y_i(t) - \varGamma(t) e^{-\frac{1}{2}\rho t} y^{(N)}(t) - e^{-\frac{1}{2}\rho t} \tilde{\eta}(t)\big|\big|^2_{Q(t)}  \\
                &\qquad\quad + \big|\big|e^{-\frac{1}{2}\rho t} v_i(t)\big|\big|^2_{R(t)} \Big]dt\\
                &\qquad + \big|\big|e^{-\frac{1}{2}\rho T} y_i(T) - \varGamma_0 e^{-\frac{1}{2}\rho T} y^{(N)}(T) - e^{-\frac{1}{2}\rho T}\tilde{\eta}_0\big|\big|^2_H\bigg\}\\
                &= \mathbb{E} \bigg\{\int_0^T \Big[\big|\big|x_i(t) - \varGamma(t) x^{(N)}(t) - \eta(t)\big|\big|^2_{Q(t)} + \big|\big|u_i(t)\big|\big|^2_{R(t)} \Big]dt \\
                &\qquad\qquad + \big|\big|x_i(T) - \varGamma_0 x^{(N)}(T) - \eta_0\big|\big|^2_H\bigg\} ,
            \end{aligned}
        \end{equation*}
        where
        \begin{equation*}
            \left\{
            \begin{aligned}
                dx_i(t) =&\ d(e^{-\frac{1}{2}\rho t} y_i(t))= -\frac{1}{2}\rho e^{-\frac{1}{2}\rho t} y_i(t)dt + e^{-\frac{1}{2}\rho t} dy_i(t)\\
                =& \bigg[\left(A(t) - \frac{1}{2}\rho\right) x_i(t) + B(t)u_i(t) + G(t)x^{(N)}(t) + f(t)\bigg]dt\\
                & + \big[C(t)x_i(t) + D(t)u_i(t) + F(t)x^{(N)}(t) + \sigma(t)\big]dW_i(t)\\
                & + \big[C_0(t)x_i(t) + D_0(t)u_i(t) + F_0(t)x^{(N)}(t) + \sigma_0(t)\big]dW_0(t),\\
                x_i(0) =&\ \xi_i .
            \end{aligned}
            \right.
        \end{equation*}
        So discounting future losses by the factor $\rho$ can be incorporated into our framework merely by substituting matrix $A(t)$ by $A(t) - \frac{1}{2}\rho$.
    \end{Remark}

    \begin{Remark}
        Consider the case where cross terms are included in the running cost of the cost functional.
        In such scenarios, the $i$th player, $1 \le i \le N$, is assumed to aim at minimizing the cost functional
        \begin{equation}\label{remark cost1}
            \begin{aligned}
                J^N_i(v_i(\cdot), v_{-i}(\cdot)) =&\ \mathbb{E} \int_0^T \bigg[\big|\big|x_i(t) - \varGamma(t) x^{(N)}(t) - \eta(t)\big|\big|^2_{Q(t)} + \big|\big|v_i(t)\big|\big|^2_{R(t)} \\
                &\qquad + 2\big(x_i(t) - \varGamma(t) x^{(N)}(t) - \eta(t)\big)^\top S(t) v_i(t) \bigg]dt,
            \end{aligned}
        \end{equation}
        subject to the  state equation
        \begin{equation}\label{remark state1}
            \left\{
            \begin{aligned}
                dx_i(t) =& \big[A(t)x_i(t) + B(t)v_i(t) + G(t)x^{(N)}(t) + f(t)\big]dt\\
                 & + \big[C(t)x_i(t) + D(t)v_i(t) + F(t)x^{(N)}(t) + \sigma(t)\big]dW_i(t)\\
                 & + \big[C_0(t)x_i(t) + D_0(t)v_i(t) + F_0(t)x^{(N)}(t) + \sigma_0(t)\big]dW_0(t),\\
                x_i(0) =&\ \xi_i .
            \end{aligned}
            \right.
        \end{equation}
        Introducing $u_i(t) := v_i(t) + R^{-1}(t) S^\top(t)\big(x_i(t) - \varGamma(t) x^{(N)}(t) - \eta(t)\big)$, $1 \le i \le N$, we can rewrite this problem as
        \begin{equation}\label{remark cost2}
            \begin{aligned}
                &J^N_i(v_i(\cdot), v_{-i}(\cdot)) = \mathbb{E} \int_0^T \bigg[\big|\big|v_i(t) + R^{-1}(t) S^\top(t)(x_i(t) - \varGamma(t) x^{(N)}(t) - \eta(t))\big|\big|^2_{R(t)}\\
                &\qquad\qquad\qquad\qquad + \big|\big|x_i(t) - \varGamma(t) x^{(N)}(t) - \eta(t)\big|\big|^2_{Q(t) - S(t) R^{-1}(t) S^\top(t)} \bigg]dt \\
                =&\ \mathbb{E} \int_0^T \bigg[\big|\big|x_i(t) - \varGamma(t) x^{(N)}(t) - \eta(t)\big|\big|^2_{Q(t) - S(t) R^{-1}(t) S^\top(t)} + \big|\big|u_i(t)\big|\big|^2_{R(t)}\bigg]dt,
            \end{aligned}
        \end{equation}
        where
        \begin{equation}\label{remark state2}
            \left\{
            \begin{aligned}
                dx_i(t) =& \big[(A(t) - B(t) R^{-1}(t) S^\top(t))x_i(t) + B(t)u_i(t) + (G(t) + B(t) R^{-1}(t)  \\
                 &\quad \times S^\top(t) \varGamma(t))x^{(N)}(t)+ (f(t) + B(t) R^{-1}(t) S^\top(t) \eta(t))\big]dt\\
                 & + \big[(C(t) - D(t) R^{-1}(t) S^\top(t))x_i(t) + D(t)u_i(t) + (F(t) + D(t) R^{-1}(t)  \\
                 &\quad \times S^\top(t) \varGamma(t))x^{(N)}(t)+ (\sigma(t) + D(t) R^{-1}(t) S^\top(t) \eta(t))\big]dW_i(t)\\
                 & + \big[(C_0(t) - D_0(t) R^{-1}(t) S^\top(t))x_i(t) + D_0(t)u_i(t) + (F_0(t) + D_0(t) R^{-1}(t)  \\
                 &\quad \times S^\top(t) \varGamma(t))x^{(N)}(t)+ (\sigma_0(t) + D_0(t) R^{-1}(t) S^\top(t) \eta(t))\big]dW_0(t),\\
                x_i(0) =&\ \xi_i .
            \end{aligned}
            \right.
        \end{equation}
        By determining the equilibrium strategies for problems (\ref{remark cost2}) and (\ref{remark state2}), one can thereby identify the equilibrium strategies for problems (\ref{remark cost1}) and (\ref{remark state1}).
    \end{Remark}

    Let $\mathcal{F}_t$ be the $\sigma$-algebra generated by $\{\xi_i, W_i(s), W_0(s), s \le t, 1 \le i \le N\}$.
    Denote $\mathcal{F}_t^i$ be the $\sigma$-algebra generated by $\{x_i(s), W_i(s), W_0(s), s \le t\}, 1 \le i \le N$.

    We define the centralized control set as
    \begin{equation*}
        \begin{aligned}
        \mathscr{U}_c[0,T] &:= \big\{(u_1(\cdot),\cdots,u_N(\cdot))|u_i(\cdot) \in L^2_{\mathcal{F}}(0,T;\mathbb{R}), 1 \le i \le N\big\},
        \end{aligned}
    \end{equation*}
    and the decentralized control set as
    \begin{equation*}
        \begin{aligned}
        \mathscr{U}_d[0,T] &:= \big\{(u_1(\cdot),\cdots,u_N(\cdot))|u_i(\cdot) \in L^2_{\mathcal{F}^i}(0,T;\mathbb{R}), 1 \le i \le N\big\}.
        \end{aligned}
    \end{equation*}

    We summarize and supplement the conditions of the above coefficients in the following assumption.

    \noindent {\bf (A1)} $\{\xi_i\}, i = 1,2,\cdots,N$ are a sequence of i.i.d. random variables, with $\mathbb{E}[\xi_i] = \bar{\xi}$, $i = 1,2,\cdots,N$, and there exists a constant $c$ such that $\sup_{1 \le i \le N} \mathbb{E} [||\xi_i||^2] \le c.$

    \noindent {\bf (A2)} $\{W_i(t), 0 \le i \le N \}$ are mutually independent, which are also independent of $\{\xi_i, 1 \le i \le N \}$.

    \noindent {\bf (A3)}
    (i) $A(\cdot)$, $G(\cdot)$, $C(\cdot)$, $F(\cdot)$, $C_0(\cdot)$, $F_0(\cdot)$, $\varGamma(\cdot) \in L^\infty(0,T;\mathbb{R}^{n \times n})$, and $B(\cdot)$, $D(\cdot)$, $D_0(\cdot) \in L^\infty(0,T;\mathbb{R}^{n \times r})$;

    (ii) $Q(\cdot) \in L^\infty(0,T;\mathbb{S}^n)$, $R(\cdot) \in L^\infty(0,T;\mathbb{S}^r)$;

    (iii) $H \in \mathbb{S}^n$ and $\varGamma_0 \in \mathbb{R}^{n \times n}$ are bounded;

    (iv) $f(\cdot)$, $\sigma(\cdot)$, $\sigma_0(\cdot)$, $\eta(\cdot) \in L^2_{\mathcal{F}^0}(0,T;\mathbb{R}^n)$; $\eta_0 \in \mathbb{R}^n$ is $\mathcal{F}_T^0$-measurable, and $\mathbb{E}[||\eta_0||^2] < \infty$.\\

    In this paper, we explore the following problem:

    \noindent {\bf (PD)}: Find an $\epsilon$-Nash equilibrium strategy $u^{*N}(\cdot) := (u_1^*(\cdot),\cdots,u_N^*(\cdot))$ among $u^N(\cdot) \in \mathscr{U}_{d}[0,T]$ for (\ref{finite cost}), subject to (\ref{state}).

    Building on the models in \cite{Fershtman-Kamien-87}, \cite{Wiszniewska-Bodnar-Mirota-15}, \cite{Wang-25}, we present a production output adjustment problem under noisy sticky prices as a case study.
    This illuminates the motivation and practical context of problem {\bf (PD)}.

    \begin{example}
        Consider a large market with $N$ firms, assuming prices are sticky and subject to random noise.
        Each firm's production output is given by
        \begin{equation*}
            dq_i(t) = \left(-\mu q_i(t) + b u_i(t)\right)dt + \left(\rho q_i(t) + \varrho u_i(t)\right)d\omega_i(t), \quad q_i(0) = \xi_i, \quad i = 1,\cdots,N,
        \end{equation*}
        where $\{ \omega_i(\cdot), i = 1,\cdots,N \}$ are independent standard Brownian motions, which are also independent of initial outputs of all firms $\{ \xi_i, i = 1,\cdots,N \}$.
        Here, $\mu$ represents the output adjustment friction, $\rho$ indicates that large-scale production increases supply chain volatility, and $\varrho$ reflects the fluctuation risk associated with production adjustments.
        Assume that the noisy prices in the following form
        \begin{equation*}
            dp(t) = \alpha\left(\beta - q^{(N)}(t) + p(t)\right)dt + \left(\sigma p(t) + \varsigma q^{(N)}(t)\right)d\omega_0(t), \quad p(0) = \zeta,
        \end{equation*}
        where $\alpha > 0$ being the speed of adjustment to the level on the demand function, and $\omega_0(t)$ is a standard Brownian motion independent of $\{ \omega_i(\cdot)$, $i = 1,\cdots,N \}$.
        $q^{(N)}(t) = \frac{1}{N} \sum_{i=1}^{N} q_i(t)$ is the average of firms' outputs, and $\beta - q^{(N)}(t)$ is the price on the demand function for the given production level of firms.
        $\sigma$ reflects more intense fluctuations at high prices, and $\varsigma$ reflects higher price volatility with increasing market supply.
        Each firm's cost functional is
        \begin{equation*}
            J^N_i(u_i(\cdot), u_{-i}(\cdot)) = \mathbb{E} \left\{\int_0^T \left(-p(t)q_i(t) + cq_i + ru_i^2\right)dt + \gamma \left(c_T - p(T)\right)q_i(T)\right\}, \quad i = 1,\cdots,N,
        \end{equation*}
        where $r > 0$ is the adjustment cost rate, $\gamma > 0$ the terminal cost rate, $0 < c < \beta$ ensures the output in the steady state is positive, and $c_T > 0$.
        Each firm aims to minimize its cost over $\mathscr{U}_d^i[0,T] := \big\{u_i(\cdot)|u_i(\cdot) \text{ is adapted to } \sigma\{ \xi_i, \zeta, \omega_i(s), \omega_0(s), s \le t \}, \mathbb{E}\int_0^T u_i^2(t) < \infty\}, i = 1,\cdots,N$.
        Evidently, this problem can be construed as problem {\bf (PD)}.
    \end{example}

    \section{Main results}
    In this section, we will first address the finite-horizon problem, and then extend the results to the infinite-horizon problem.

    \subsection{The finite-horizon problem}

    Subsequently, the time arguments of functions may be omitted if their exclusion does not lead to confusion.
    Let's first consider the centralization issue below.

    \noindent {\bf (PC)}: Find a Nash equilibrium strategy $\check{u}^{N}(\cdot) := (\check{u}_1(\cdot),\cdots,\check{u}_N(\cdot))$ among $u^N(\cdot) \in \mathscr{U}_{c}[0,T]$ for (\ref{finite cost}), subject to (\ref{state}).

    We initially derive the following result.

    \begin{mythm}\label{result1}
        Under Assumptions (A1)-(A3), for the initial value $\xi_i, i = 1,\cdots,N$, problem {\bf (PC)} admits a Nash equilibrium strategy $\check{u}^N(\cdot) \in \mathscr{U}_c[0,T]$, if and only if the following two conditions hold:

        (i) The adapted solution $(\check{x}_i(\cdot), \check{p}_i^j(\cdot), \check{q}_i^{j,k}(\cdot), i,j = 1,\cdots,N, k = 0,1,\cdots,N)$ to the FBSDEs
        \begin{equation}\label{adjoint FBSDE}
            \left\{
            \begin{aligned}
                d\check{x}_i &= \big[A\check{x}_i + B\check{u}_i + G\check{x}^{(N)} + f\big]dt + \big[C\check{x}_i + D\check{u}_i + F\check{x}^{(N)} + \sigma\big]dW_i \\
                &\quad + \big[C_0\check{x}_i + D_0\check{u}_i + F_0\check{x}^{(N)} + \sigma_0\big]dW_0,\\
                d\check{p}_i^i &= -\bigg[A^\top\check{p}_i^i + G^\top\check{p}_i^{(N)} + C^\top\check{q}_i^{i,i} + F^\top\check{q}_i^{(N)} + C_0^\top\check{q}_i^{i,0} + F_0^\top\check{q}_i^{(N),0}  \\
                &\qquad + \left(I - \frac{\varGamma}{N}\right)^\top Q \left(\check{x}_i - \varGamma \check{x}^{(N)} - \eta\right)\bigg]dt + \sum_{k=1}^{N} \check{q}_i^{i,k} dW_k + \check{q}_i^{i,0} dW_0,\\
                d\check{p}_i^j &= -\bigg[A^\top\check{p}_i^j + G^\top\check{p}_i^{(N)} + C^\top\check{q}_i^{j,j} + F^\top\check{q}_i^{(N)} + C_0^\top\check{q}_i^{j,0} + F_0^\top\check{q}_i^{(N),0} \\
                &\qquad - \left. \frac{\varGamma}{N}^\top Q \left(\check{x}_i - \varGamma \check{x}^{(N)} - \eta\right)\right]dt + \sum_{k=1}^{N} \check{q}_i^{j,k} dW_k + \check{q}_i^{j,0} dW_0, \quad j \ne i,\\
                \check{x}_i(0) &= \xi_i , \quad \check{p}_i^i(T) = \left(I - \frac{\varGamma_0}{N}\right)^\top H \left(\check{x}_i(T) - \varGamma_0 \check{x}^{(N)}(T) - \eta_0\right) ,\\
                \check{p}_i^j(T) &= - \frac{\varGamma_0}{N}^\top H \left(\check{x}_i(T) - \varGamma_0 \check{x}^{(N)}(T) - \eta_0\right),
            \end{aligned}
            \right.
        \end{equation}
        where $\check{p}_i^{(N)} := \frac{1}{N} \sum_{j=1}^{N} \check{p}_i^j$, $\check{q}_i^{(N)} := \frac{1}{N} \sum_{j=1}^{N} \check{q}_i^{j,j}$, $\check{q}_i^{(N),0} := \frac{1}{N} \sum_{j=1}^{N} \check{q}_i^{j,0}$, satisfies the following stationarity condition:
        \begin{equation}\label{stationarity condition}
            B^\top\check{p}_i^i + D^\top\check{q}_i^{i,i} + D_0^\top\check{q}_i^{i,0} + R\check{u}_i = 0, \quad i = 1,\cdots,N.
        \end{equation}

        (ii) For $i = 1,\cdots,N$, the following convexity condition holds:
        \begin{equation}\label{convexity condition}
            \begin{aligned}
            \mathbb{E} \bigg\{\int_0^T \Big[\big|\big|\tilde{x}_i^i - \varGamma \tilde{x}^{i,(N)}\big|\big|^2_Q + \big|\big|u_i\big|\big|^2_R\Big]dt + \big|\big|\tilde{x}_i^i(T) - \varGamma_0 \tilde{x}^{i,(N)}(T)\big|\big|^2_H \bigg\} \ge 0,\\ \forall u_i(\cdot) \in \mathscr{U}_c^i[0,T],
            \end{aligned}
        \end{equation}
        where $\mathscr{U}_c^i[0,T] := \big\{u_i(\cdot)|u_i(\cdot) \in L^2_{\mathcal{F}}(0,T;\mathbb{R})\big\}$, and $\tilde{x}_i^i(\cdot)$, $\tilde{x}_j^i(\cdot)$ represent the solutions to the following SDEs:
        \begin{equation}\label{tilde x_i}
            \left\{
            \begin{aligned}
                d\tilde{x}_i^i =& \left[A\tilde{x}_i^i + Bu_i + G\tilde{x}^{i,(N)}\right]dt + \left[C\tilde{x}_i^i + Du_i + F\tilde{x}^{i,(N)}\right]dW_i \\
                &+ \left[C_0\tilde{x}_i^i + D_0u_i + F_0\tilde{x}^{i,(N)}\right]dW_0,\\
                \tilde{x}_i^i(0) =&\ 0,
            \end{aligned}
            \right.
        \end{equation}
        \begin{equation}\label{tilde x_j}
            \left\{
            \begin{aligned}
                d\tilde{x}_j^i =& \left[A\tilde{x}_j^i + G\tilde{x}^{i,(N)}\right]dt + \left[C\tilde{x}_j^i + F\tilde{x}^{i,(N)}\right]dW_i + \left[C_0\tilde{x}_j^i + F_0\tilde{x}^{i,(N)}\right]dW_0,\\
                \tilde{x}_j^i(0) =&\ 0, \qquad j \ne i,
            \end{aligned}
            \right.
        \end{equation}
        respectively, and $\tilde{x}^{i,(N)} := \frac{1}{N} \sum_{j=1}^{N} \tilde{x}_j^i$.
    \end{mythm}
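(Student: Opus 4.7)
The plan is to use a standard first-variation/second-variation argument on the cost of each individual player, with the technical complication that perturbing agent $i$'s control changes every state $x_j$ through the mean-field coupling. Accordingly, player $i$ needs $N$ adjoint processes $(\check{p}_i^j, \check{q}_i^{j,k})$ — one per state — which is why the system (\ref{adjoint FBSDE}) has the size it does.

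Fix $i$ and consider a perturbation $u_i^\epsilon = \check{u}_i + \epsilon v_i$ with $v_i \in \mathscr{U}_c^i[0,T]$, keeping $\check{u}_j$ fixed for $j \neq i$. By linearity of (\ref{state}), the induced state perturbations are $x_i^\epsilon = \check{x}_i + \epsilon \tilde{x}_i^i$ and $x_j^\epsilon = \check{x}_j + \epsilon \tilde{x}_j^i$ for $j \neq i$, where $\tilde{x}_i^i$ and $\tilde{x}_j^i$ solve exactly (\ref{tilde x_i}) and (\ref{tilde x_j}) with $v_i$ in place of $u_i$. Substituting into (\ref{finite cost}) and expanding yields
\begin{equation*}
    J_i^N(u_i^\epsilon, \check{u}_{-i}) = J_i^N(\check{u}_i, \check{u}_{-i}) + 2\epsilon L_i(v_i) + \epsilon^2 M_i(v_i),
\end{equation*}
where $L_i$ is linear and $M_i$ is quadratic in $v_i$. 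A direct inspection shows that $M_i(v_i)$ equals the left-hand side of (\ref{convexity condition}), so nonnegativity of $M_i$ for all $v_i$ is precisely condition (ii). Since $M_i \ge 0$ renders the cost convex in $u_i$, Nash optimality of $\check{u}_i$ against $\check{u}_{-i}$ is equivalent to $L_i(v_i) = 0$ for every admissible $v_i$.

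To reformulate $L_i(v_i)$ into the clean stationarity condition (\ref{stationarity condition}), I would apply It\^o's formula to $\langle \tilde{x}_i^i, \check{p}_i^i\rangle$ and to $\langle \tilde{x}_j^i, \check{p}_i^j\rangle$ for $j \neq i$, using the BSDEs in (\ref{adjoint FBSDE}) and the SDEs (\ref{tilde x_i})--(\ref{tilde x_j}). Summing over $j$ and exploiting $\sum_{j=1}^N \check{p}_i^j = N \check{p}_i^{(N)}$ together with the analogous identities for the martingale parts, the mean-field drivers $G\check{x}^{(N)}$, $F\check{x}^{(N)}$, $F_0\check{x}^{(N)}$ pair with the $G^\top \check{p}_i^{(N)}$, $F^\top \check{q}_i^{(N)}$, $F_0^\top \check{q}_i^{(N),0}$ terms in (\ref{adjoint FBSDE}) and cancel. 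The running-cost derivatives $(I - \varGamma/N)^\top Q$ in the drift of $\check{p}_i^i$ and $-(\varGamma/N)^\top Q$ in that of $\check{p}_i^j$ for $j \neq i$, together with the analogous decomposition of the terminal conditions at $T$, are exactly what is needed to absorb the $Q$- and $H$-weighted state terms arising in $L_i(v_i)$ from the difference $(\check{x}_i - \varGamma\check{x}^{(N)} - \eta)$. What survives is
\begin{equation*}
    L_i(v_i) = \mathbb{E} \int_0^T \big\langle B^\top \check{p}_i^i + D^\top \check{q}_i^{i,i} + D_0^\top \check{q}_i^{i,0} + R\check{u}_i,\; v_i \big\rangle dt,
\end{equation*}
whose vanishing for every $v_i \in \mathscr{U}_c^i[0,T]$ is equivalent to (\ref{stationarity condition}) by the fundamental lemma of the calculus of variations. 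Combined with the convexity discussion of the previous paragraph, this closes both directions of the ``if and only if''.

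The main obstacle is the careful bookkeeping in the duality step: one has to verify that the asymmetric structure of (\ref{adjoint FBSDE}) — the $(I - \varGamma/N)^\top Q$ driver of $\check{p}_i^i$ versus the $-(\varGamma/N)^\top Q$ driver of $\check{p}_i^j$ for $j \neq i$, and the analogous split of the terminal data at $T$ — is precisely what is required so that the sum of the It\^o pairings telescopes to just the single-agent expression appearing in (\ref{stationarity condition}). This split mirrors the fact that $x^{(N)}$ enters the $i$th cost through every $x_j$ with weight $\varGamma/N$, and it is exactly what distinguishes the games setup (one cost per player, and hence one BSDE per player per state) from a teams setup with a single shared cost.
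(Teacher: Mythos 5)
Your proposal is correct and follows essentially the same route as the paper's own proof: the same perturbation $\check{u}_i+\theta u_i$, the same expansion of the cost into a linear and a quadratic part in $\theta$ identified with the stationarity and convexity conditions, and the same duality step applying It\^{o}'s formula to $\langle \check{p}_i^j,\tilde{x}_j^i\rangle$ and summing over $j$ so that the mean-field and $Q$-, $H$-weighted terms telescope. Nothing further is needed.
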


    \begin{proof}
        We suppose that $(\check{u}_1(\cdot), \cdots, \check{u}_N(\cdot))$ is a candidate of the optimal strategy, $(\check{x}_1(\cdot), \cdots, \check{x}_N(\cdot))$ is the corresponding state process.
        For any $u_i(\cdot) \in \mathscr{U}_c^i[0,T]$ and $\theta \in \mathbb{R}$, let $x_j^{i,\theta}(\cdot), j=1,\cdots,N$ be the solutions to the following perturbed state equations:
        \begin{equation*}
            \left\{
            \begin{aligned}
                dx_i^{i,\theta} =& \big[Ax_i^{i,\theta} + B(\check{u}_i + \theta u_i) + Gx^{i,\theta,(N)} + f\big]dt + \big[Cx_i^{i,\theta} + D(\check{u}_i + \theta u_i) + \sigma\big]dW_i \\
                &+ \big[C_0x_i^{i,\theta} + D_0(\check{u}_i + \theta u_i) + \sigma_0\big]dW_0,\\
                x_i^{i,\theta}(0) =&\ \xi_i,
            \end{aligned}
            \right.
        \end{equation*}
        \begin{equation*}
            \left\{
            \begin{aligned}
                dx_j^{i,\theta} =& \big[Ax_j^{i,\theta} + B\check{u}_j + Gx^{i,\theta,(N)} + f\big]dt + \big[Cx_j^{i,\theta} + D\check{u}_j + \sigma\big]dW_i \\
                &+ \big[C_0x_j^{i,\theta} + D_0\check{u}_j + \sigma_0\big]dW_0, \qquad j \ne i,\\
                x_j^{i,\theta}(0) =&\ \xi_j,
            \end{aligned}
            \right.
        \end{equation*}
        where $x^{i,\theta,(N)} := \frac{1}{N} \sum_{j=1}^{N} x_j^{i,\theta}$.
        Here, the superscript $i$ denotes the state corresponding to the perturbation of the $i$th control $u_i(\cdot)$.

        Then, $\tilde{x}_j^i(\cdot) := \frac{x_j^{i,\theta}(\cdot) - \check{x}_j^i(\cdot)}{\theta}, j = 1,\cdots,N$ are independent of $\theta$ and satisfy (\ref{tilde x_i}) and (\ref{tilde x_j}).
        Applying It\^{o}'s formula to $\langle \check{p}_i^j(\cdot), \tilde{x}_j^i(\cdot) \rangle, j = 1,\cdots,N$, integrating from $0$ to $T$, and taking the expectation, we have
        \begin{equation*}
            \begin{aligned}
                &\mathbb{E} \bigg[ \left(\check{x}_i(T) - \varGamma_0 \check{x}^{(N)}(T) - \eta_0\right)^\top H \left(I - \frac{\varGamma_0}{N}\right) \tilde{x}_i^i(T)\bigg] \\
                = &\ \mathbb{E} \big[\langle \check{p}_i^i(T), \tilde{x}_i^i(T) \rangle - \langle \check{p}_i^i(0) \tilde{x}_i^i(0)\big \rangle\big] \\
                = &\ \mathbb{E} \int_0^T \bigg\{ -\bigg[A^\top\check{p}_i^i + G^\top\check{p}_i^{(N)} + C^\top\check{q}_i^{i,i} + F^\top\check{q}_i^{(N)} + C_0^\top\check{q}_i^{i,0} + F_0^\top\check{q}_i^{(N),0} \\
                &\quad\qquad + \left(I - \frac{\varGamma}{N}\right)^\top Q \left(\check{x}_i - \varGamma \check{x}^{(N)} - \eta\right)\bigg]^\top \tilde{x}_i^i
                 + \check{p}_i^{i\top} \left[A\tilde{x}_i^i + Bu_i + G\tilde{x}^{i,(N)}\right]  \\
                &\quad\qquad + \check{q}_i^{i,i\top} \left[C\tilde{x}_i^i + Du_i + F\tilde{x}^{i,(N)}\right]+ \check{q}_i^{i,0\top} \left[C_0\tilde{x}_i^i + D_0u_i + F_0\tilde{x}^{i,(N)}\right] \bigg\}dt,
            \end{aligned}
        \end{equation*}
        \begin{equation*}
            \begin{aligned}
                &\mathbb{E} \bigg[ -\left(\check{x}_i(T) - \varGamma_0 \check{x}^{(N)}(T) - \eta_0\right)^\top H \frac{\varGamma_0}{N} \tilde{x}_j^i(T)\bigg] \\
                = &\ \mathbb{E} \big[\langle \check{p}_i^j(T), \tilde{x}_j^i(T) \rangle - \langle \check{p}_i^j(0) \tilde{x}_j^i(0) \rangle\big] \\
                = &\ \mathbb{E} \int_0^T \bigg\{ -\bigg[A^\top\check{p}_i^j + G^\top\check{p}_i^{(N)} + C^\top\check{q}_i^{j,j} + F^\top\check{q}_i^{(N)} + C_0^\top\check{q}_i^{j,0} + F_0^\top\check{q}_i^{(N),0} \\
                &\quad\qquad  - \frac{\varGamma}{N}^\top Q \left(\check{x}_i - \varGamma \check{x}^{(N)} - \eta\right)\bigg]^\top \tilde{x}_j^i + \check{p}_i^{j\top} \left[A\tilde{x}_j^i + G\tilde{x}^{i,(N)}\right]\\
                &\quad\qquad  + \check{q}_i^{j,j\top} \left[C\tilde{x}_j^i + F\tilde{x}^{i,(N)}\right] + \check{q}_i^{j,0\top} \left[C_0\tilde{x}_j^i + F_0\tilde{x}^{i,(N)}\right] \bigg\}dt, \qquad j \ne i,
            \end{aligned}
        \end{equation*}
        which further implies that
        \begin{equation*}
            \begin{aligned}
                &\mathbb{E} \bigg[ \left(\check{x}_i(T) - \varGamma_0 \check{x}^{(N)}(T) - \eta_0\right)^\top H \left(\tilde{x}_i^i(T) - \varGamma_0 \tilde{x}^{i,(N)}(T)\right) \bigg] \\
                = &\sum_{j=1}^{N} \mathbb{E} \big[\langle \check{p}_i^j(T), \tilde{x}_j^i(T) \rangle - \langle \check{p}_i^j(0) \tilde{x}_j^i(0) \rangle\big] \\
                = &\ \mathbb{E} \int_0^T \left[ \left(\check{p}_i^{i\top}B + \check{q}_i^{i,i\top}D + \check{q}_i^{i,0\top}D_0\right) u_i + \left(\check{x}_i - \varGamma \check{x}^{(N)} - \eta\right)^\top Q \left(\tilde{x}_i^i - \varGamma \tilde{x}^{i,(N)}\right) \right]dt.
            \end{aligned}
        \end{equation*}
        Therefore,
        \begin{equation*}
        \begin{aligned}
            &J^N_i(\check{u}_i(\cdot) + \theta u_i(\cdot), \check{u}_{-i}(\cdot)) - J^N_i(\check{u}_i(\cdot), \check{u}_{-i}(\cdot))\\
            =&\ \mathbb{E} \left\{\int_0^T \left[\big|\big|x_i^{i,\theta} - \varGamma x^{i,\theta,(N)} - \eta\big|\big|^2_Q + \big|\big|\check{u}_i + \theta u_i\big|big|^2_R\right]dt
            + \big|\big|x_i^{i,\theta}(T) - \varGamma_0 x^{i,\theta,(N)}(T) - \eta_0\big|\big|^2_H\right\}\\
            &- \mathbb{E} \left\{\int_0^T \left[\big|\big|\check{x}_i - \varGamma \check{x}^{(N)} - \eta\big|\big|^2_Q +\big|\big|\check{u}_i\big|\big|^2_R\right]dt
            + \big|\big|\check{x}_i(T) - \varGamma_0 \check{x}^{(N)}(T) - \eta_0\big|\big|^2_H\right\} \\
            =&\ \theta^2 \mathbb{E} \left\{\int_0^T \left[\big|\big|\tilde{x}_i^i - \varGamma \tilde{x}^{i,(N)}\big|\big|^2_Q + \big|\big|u_i\big|\big|^2_R\right]dt + \big|\big|\tilde{x}_i^i(T) - \varGamma_0 \tilde{x}^{i,(N)}(T)\big|\big|^2_H\right\} \\
            & + 2\theta \mathbb{E} \bigg\{\int_0^T \bigg[\left(\tilde{x}_i^i - \varGamma \tilde{x}^{i,(N)}\right)^\top Q \left(\check{x}_i - \varGamma \check{x}^{(N)} - \eta\right) + u_i^\top R \check{u}_i\bigg]dt \\
            & \qquad + \left(\tilde{x}_i^i(T) - \varGamma_0 \tilde{x}^{i,(N)}(T)\right)^\top H \left(\check{x}_i(T) - \varGamma_0 \check{x}^{(N)}(T) - \eta_0\right)\bigg\} \\
            =&\ \theta^2 \mathbb{E} \left\{\int_0^T \left[\big|\big|\tilde{x}_i^i - \varGamma \tilde{x}^{i,(N)}\big|\big|^2_Q + \big|\big|u_i\big|\big|^2_R\right]dt + \big|\big|\tilde{x}_i^i(T) - \varGamma_0 \tilde{x}^{i,(N)}(T)\big|\big|^2_H\right\} \\
            & + 2\theta \mathbb{E} \bigg\{\int_0^T u_i^\top \left(B^\top\check{p}_i^i + D^\top\check{q}_i^{i,i} + D_0^\top\check{q}_i^{i,0} + R\check{u}_i\right)dt \bigg\}.
        \end{aligned}
        \end{equation*}
        Thus, we have,
        $$J^N_i(\check{u}_i(\cdot), \check{u}_{-i}(\cdot)) \le J^N_i(\check{u}_i(\cdot) + \theta u_i(\cdot), \check{u}_{-i}(\cdot)),\quad \forall u_i(\cdot) \in \mathscr{U}_c^i[0,T],$$
        if and only if (\ref{stationarity condition}) and (\ref{convexity condition}) hold. The proof is complete.
    \end{proof}

    To facilitate the subsequent analysis, we introduce the following definitions and assumptions of convexity.

    \begin{mydef}
        Problem {\bf (PC)} is called convex, if for any $i = 1,\cdots,N$, $0 < \lambda < 1$ and $u_i(\cdot)$, $v_i(\cdot) \in \mathscr{U}_c^i[0,T]$,
        $$J^N_i(\lambda u_i(\cdot) + (1-\lambda) v_i(\cdot), u_{-i}(\cdot)) \le \lambda J^N_i(u_i(\cdot), u_{-i}(\cdot)) + (1-\lambda) J^N_i(v_i(\cdot), u_{-i}(\cdot)).$$
    \end{mydef}

    \noindent {\bf (A4)} Problem {\bf (PC)} is convex.

    \begin{mypro}
        Problem {\bf (PC)} is convex in $u_i(\cdot)$ if and only if (\ref{convexity condition}) holds.
    \end{mypro}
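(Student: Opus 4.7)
The plan exploits that, with $u_{-i}(\cdot)$ fixed, $J^N_i(\cdot, u_{-i}(\cdot))$ is a quadratic functional of $u_i(\cdot)$; for any quadratic functional on a vector space, midpoint/Jensen convexity is equivalent to non-negativity of its purely quadratic part, and (\ref{convexity condition}) is exactly that quadratic part. The proof will be reduced to the polarization identity
\begin{equation*}
\lambda \|a\|_M^2 + (1-\lambda)\|b\|_M^2 - \|\lambda a + (1-\lambda) b\|_M^2 = \lambda(1-\lambda)\|a - b\|_M^2
\end{equation*}
for any symmetric $M$, applied termwise to (\ref{finite cost}).

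First I would fix $u_i(\cdot), v_i(\cdot) \in \mathscr{U}_c^i[0,T]$ and $\lambda \in (0,1)$, and let $x_j^u, x_j^v, x_j^\lambda$ denote the states of (\ref{state}) under the $i$th controls $u_i$, $v_i$, $\lambda u_i + (1-\lambda)v_i$ respectively (while the other $N-1$ components of the control profile remain $u_{-i}$). Since (\ref{state}) is linear in $(x, u)$ with additive non-homogeneous terms $f, \sigma, \sigma_0$ that do not depend on $u_i$, pathwise uniqueness of strong solutions gives $x_j^\lambda = \lambda x_j^u + (1-\lambda)x_j^v$ for every $j = 1, \ldots, N$. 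Setting $\tilde{x}_j^i := x_j^u - x_j^v$, the processes $\tilde{x}_i^i$ and $\{\tilde{x}_j^i\}_{j \ne i}$ satisfy exactly the SDEs (\ref{tilde x_i})--(\ref{tilde x_j}) but with $u_i$ there replaced by $u_i - v_i$; the same affine structure yields $x_i^\lambda - \varGamma x^{\lambda,(N)} - \eta = \lambda(x_i^u - \varGamma x^{u,(N)} - \eta) + (1-\lambda)(x_i^v - \varGamma x^{v,(N)} - \eta)$, and analogously at $t = T$ with $\varGamma_0, \eta_0$.

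Applying the polarization identity above to the running cost (with $M = Q$ and $M = R$) and to the terminal cost (with $M = H$), then taking expectation, I obtain
\begin{equation*}
\begin{aligned}
&\lambda J_i^N(u_i, u_{-i}) + (1-\lambda)J_i^N(v_i, u_{-i}) - J_i^N(\lambda u_i + (1-\lambda)v_i, u_{-i}) \\
=&\ \lambda(1-\lambda)\,\mathbb{E}\bigg\{\int_0^T \Big[\|\tilde{x}_i^i - \varGamma \tilde{x}^{i,(N)}\|_Q^2 + \|u_i - v_i\|_R^2\Big]dt + \|\tilde{x}_i^i(T) - \varGamma_0 \tilde{x}^{i,(N)}(T)\|_H^2\bigg\}.
\end{aligned}
\end{equation*}
The convexity inequality of the definition is therefore equivalent to the non-negativity of the right-hand brace for every admissible pair $u_i, v_i$. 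Because $\mathscr{U}_c^i[0,T]$ is a linear space, the difference $u_i - v_i$ ranges over all of $\mathscr{U}_c^i[0,T]$, and the condition reduces verbatim to (\ref{convexity condition}).

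The argument is essentially the standard characterization of convexity for quadratic forms on a Hilbert space; the only step that deserves care is the affine superposition identity $x_j^\lambda = \lambda x_j^u + (1-\lambda) x_j^v$ on the whole path, but this follows by subtracting the SDEs (\ref{state}) written for the two sides and invoking pathwise uniqueness on the resulting homogeneous linear system. I do not anticipate any further obstacle.
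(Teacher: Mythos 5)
Your proof is correct and follows essentially the same route as the paper's: both reduce the midpoint-convexity gap to the identity $\lambda J_i^N(u_i) + (1-\lambda)J_i^N(v_i) - J_i^N(\lambda u_i + (1-\lambda)v_i) = \lambda(1-\lambda)\,\tilde{J}$, where $\tilde{J}$ is exactly the quadratic functional in (\ref{convexity condition}) evaluated at the difference control. You merely make explicit the two ingredients the paper leaves tacit — the affine superposition of solutions to the linear state equation and the termwise polarization identity — which is a welcome but not substantively different elaboration.
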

    \begin{proof}
        Let $y_i(\cdot)$ and $z_i(\cdot)$ be the state processes corresponding to control $v_i(\cdot)$ and $w_i(\cdot)$, respectively.
        For $0 < \lambda < 1$, we denote $\tilde{x}_i^i := y_i - z_i$, $u_i := v_i - w_i$, then
        \begin{equation*}
            \begin{aligned}
                \lambda J^N_i(v_i(\cdot), u_{-i}(\cdot)) + (1-\lambda) J^N_i(w_i(\cdot), u_{-i}(\cdot)) - J^N_i(\lambda v_i(\cdot) + (1-\lambda) w_i(\cdot), u_{-i}(\cdot)) \\
                = \lambda(1-\lambda) \mathbb{E} \left\{\int_0^T \left[\big|\big|\tilde{x}_i^i - \varGamma \tilde{x}^{i,(N)}\big|\big|^2_Q + \big|\big|u_i\big|\big|^2_R\right]dt + \big|\big|\tilde{x}_i^i(T) - \varGamma_0 \tilde{x}^{i,(N)}(T)\big|\big|^2_H\right\}.
            \end{aligned}
        \end{equation*}
        By the definition of convexity, the proposition follows.
    \end{proof}

    \begin{Remark}
        The above proposition shows that the convexity condition (\ref{convexity condition}) is equivalent to the convexity of the problem {\bf (PC)}.
        In particular, if the state weight and control weight $Q(\cdot), R(\cdot), H \ge 0$ in the cost functional, then the assumption holds.
    \end{Remark}

    Denote $\mathbb{E}_0[\cdot] := \mathbb{E}[\cdot|\mathcal{F}_t^0]$.
    By directly taking the limit as $N \to \infty$ and applying the law of large numbers, $\check{x}_i(\cdot)$, $\check{p}_i^i(\cdot)$, $\check{p}_i^j(\cdot)$, $\check{q}_i^{i,i}(\cdot)$, $\check{q}_i^{i,0}(\cdot)$, $\check{q}_i^{j,j}(\cdot)$, $\check{q}_i^{j,0}(\cdot)$, $\check{x}^{(N)}(\cdot)$, $\check{p}_i^{(N)}(\cdot)$, $\check{q}_i^{(N)}(\cdot)$, $\check{q}_i^{(N),0}(\cdot)$, $j \ne i, i = 1,\cdots,N$ in (\ref{adjoint FBSDE}) approximates to $\bar{x}_i(\cdot)$, $\bar{p}_i^i(\cdot)$, $\bar{p}_i^j(\cdot)$, $\bar{q}_i^{i,i}(\cdot)$, $\bar{q}_i^{i,0}(\cdot)$, $\bar{q}_i^{j,j}(\cdot)$, $\bar{q}_i^{j,0}(\cdot)$, $\mathbb{E}_0[\bar{x}_i(\cdot)]$, $\mathbb{E}_0[\bar{p}_i^j(\cdot)]$, $\mathbb{E}_0[\bar{q}_i^{j,j}(\cdot)]$, $\mathbb{E}_0[\bar{q}_i^{j,0}(\cdot)]$, $j \ne i, i = 1,\cdots,N$, respectively, as below:
    \begin{equation}\label{limit FBSDE}
        \left\{
        \begin{aligned}
            d\bar{x}_i &= \big[A\bar{x}_i + B\bar{u}_i + G\mathbb{E}_0[\bar{x}_i] + f\big]dt + \big[C\bar{x}_i + D\bar{u}_i + F\mathbb{E}_0[\bar{x}_i] + \sigma\big]dW_i \\
            &\quad + \big[C_0\bar{x}_i + D_0\bar{u}_i + F_0\mathbb{E}_0[\bar{x}_i] + \sigma_0\big]dW_0,\\
            d\bar{p}_i^i &= -\left[A^\top\bar{p}_i^i + G^\top\mathbb{E}_0[\bar{p}_i^j] + C^\top\bar{q}_i^{i,i} + F^\top\mathbb{E}_0[\bar{q}_i^{j,j}] + C_0^\top\bar{q}_i^{i,0} + F_0^\top\mathbb{E}_0[\bar{q}_i^{j,0}] \right. \\
            &\qquad + Q \left(\bar{x}_i - \varGamma \mathbb{E}_0[\bar{x}_i] - \eta\right)\Big]dt + \bar{q}_i^{i,i} dW_i + \bar{q}_i^{i,0} dW_0,\\
            d\bar{p}_i^j &= -\left[A^\top\bar{p}_i^j + G^\top\mathbb{E}_0[\bar{p}_i^j] + C^\top\bar{q}_i^{j,j} + F^\top\mathbb{E}_0[\bar{q}_i^{j,j}] + C_0^\top\bar{q}_i^{j,0} + F_0^\top\mathbb{E}_0[\bar{q}_i^{j,0}] \right]dt\\
            &\quad + \bar{q}_i^{j,j} dW_j + \bar{q}_i^{j,0} dW_0, \quad j \ne i,\\
            \bar{x}_i(0) &= \xi_i , \quad \bar{p}_i^i(T) = H \left(\bar{x}_i(T) - \varGamma_0 \mathbb{E}_0[\bar{x}_i(T)] - \eta_0\right), \bar{p}_i^j(T) = 0,
        \end{aligned}
        \right.
    \end{equation}
    with the decentralized stationarity condition:
    \begin{equation}\label{limit stationarity condition}
        B^\top\bar{p}_i^i + D^\top\bar{q}_i^{i,i} + D_0^\top\bar{q}_i^{i,0} + R\bar{u}_i = 0, \quad i = 1,\cdots,N,
    \end{equation}
    where
    \begin{equation}\label{E0barxi}
        \left\{
        \begin{aligned}
            d\mathbb{E}_0[\bar{x}_i] &= \big[\left(A + G\right)\mathbb{E}_0[\bar{x}_i] + B\mathbb{E}_0[\bar{u}_i] + f\big]dt + \big[\left(C_0 + F_0\right)\mathbb{E}_0[\bar{x}_i] + D_0\mathbb{E}_0[\bar{u}_i] + \sigma_0\big]dW_0,\\
            \mathbb{E}_0[\bar{x}_i(0)] &= \bar{\xi},\quad i = 1,\cdots,N.
        \end{aligned}
        \right.
    \end{equation}
    Note that $\mathbb{E}_0[\bar{u}_i]$ and $\mathbb{E}_0[\bar{x}_i]$ are not depend on $i$, we denote $\bar{x} := \mathbb{E}_0[\bar{x}_i]$, $\bar{u} := \mathbb{E}_0[\bar{u}_i]$.
    We can rewrite (\ref{E0barxi}) as
    \begin{equation}\label{barx}
        \left\{
        \begin{aligned}
            d\bar{x} &= \big[\left(A + G\right)\bar{x} + B\bar{u} + f\big]dt + \big[\left(C_0 + F_0\right)\bar{x} + D_0\bar{u} + \sigma_0\big]dW_0,\\
            \bar{x}(0) &= \bar{\xi}.
        \end{aligned}
        \right.
    \end{equation}

    From the last equation in (\ref{limit FBSDE}), and by invoking the existence and uniqueness of solutions to {\it backward stochastic differential equations} (BSDEs), we are pleasantly surprised to find that
    $$\bar{p}_i^j(\cdot) \equiv 0,\ \bar{q}_i^{j,j}(\cdot) \equiv 0,\ \bar{q}_i^{j,0}(\cdot) \equiv 0,\quad j \ne i,\ i = 1,\cdots,N.$$
    Accordingly, the second equation in (\ref{limit FBSDE}) can be rewritten as
    $$d\bar{p}_i^i = -\left[A^\top\bar{p}_i^i + C^\top\bar{q}_i^{i,i} + C_0^\top\bar{q}_i^{i,0} + Q \left(\bar{x}_i - \varGamma \bar{x} - \eta\right)\right]dt + \bar{q}_i^{i,i} dW_i + \bar{q}_i^{i,0} dW_0,\quad i = 1,\cdots,N.$$

    \begin{Remark}
        Due to the mean field term in the state equation (\ref{state}), there's correlation in individual states, causing a sharp increase in the number of adjoint variables.
        Using the conventional approach to decouple high-dimensional FBSDEs \cite{Si-Shi-25} would require four Riccati equations, complicating the analysis.
        To address this, before decoupling, we first obtain the limiting FBSDEs by taking the limit as $N \to \infty$ and applying the law of large numbers.
        From these, we unexpectedly find some variables are identically zero due to the existence and uniqueness of solutions to BSDEs.
        Thus, only two Riccati equations are needed in the subsequent decoupling process, greatly simplifying the problem solving process.
    \end{Remark}

    Consider the transformation
    \begin{equation}\label{barpii}
        \bar{p}_i^i(\cdot) = P(\cdot)\bar{x}_i(\cdot) + K(\cdot)\bar{x}(\cdot) + \varphi(\cdot),\quad i = 1,\cdots,N,
    \end{equation}
    where
    $d\varphi = \phi dt + \psi dW_0$, $P(T) = H$, $K(T) = -H\varGamma_0$, $\varphi(T) = -H\eta_0$. By It\^{o}'s formula, we get
    \begin{equation}\label{dbarpii}
        \begin{aligned}
            d\bar{p}_i^i &= \dot{P} \bar{x}_i dt + P\Big\{\big[A\bar{x}_i + B\bar{u}_i + G\bar{x} + f\big]dt + \big[C\bar{x}_i + D\bar{u}_i + F\bar{x} + \sigma\big]dW_i \\
            &\quad + \big[C_0\bar{x}_i + D_0\bar{u}_i + F_0\bar{x} + \sigma_0\big]dW_0\Big\} + \dot{K} \bar{x} dt + K\Big\{\big[\left(A+G\right)\bar{x} + B\bar{u} + f\big]dt\\
            &\quad  + \big[\left(C_0+F_0\right)\bar{x} + D_0\bar{u} + \sigma_0\big]dW_0\Big\} + \phi dt + \psi dW_0.
        \end{aligned}
    \end{equation}
    Comparing the coefficients of the corresponding diffusion terms in (\ref{dbarpii}) and the second equation of (\ref{limit FBSDE}), we have
    \begin{equation}\label{barqiii}
        \bar{q}_i^{i,i} = P\big[C\bar{x}_i + D\bar{u}_i + F\bar{x} + \sigma\big],\quad i = 1,\cdots,N,
    \end{equation}
    \begin{equation}\label{barqii0}
        \bar{q}_i^{i,0} = \psi + P\big[C_0\bar{x}_i + D_0\bar{u}_i + F_0\bar{x} + \sigma_0\big] + K\big[\left(C_0 + F_0\right)\bar{x} + D_0\bar{u} + \sigma_0\big],\quad i = 1,\cdots,N.
    \end{equation}

    Substitute (\ref{barpii}), (\ref{barqiii}) and (\ref{barqii0}) into (\ref{limit stationarity condition}), we have
    \begin{equation*}
        \begin{aligned}
            &B^\top \left[P\bar{x}_i + K\bar{x} + \varphi\right] + D^\top P\left[C\bar{x}_i + D\bar{u}_i + F\bar{x} + \sigma\right] + D_0^\top \big\{\psi + P\big[C_0\bar{x}_i + D_0\bar{u}_i \\
            & + F_0\bar{x}+ \sigma_0] + K\left[\left(C_0 + F_0\right)\bar{x} + D_0\bar{u} + \sigma_0\right]\big\} + R\bar{u}_i = 0,\quad i = 1,\cdots,N.
        \end{aligned}
    \end{equation*}
    Denote
    \begin{equation*}
        \left\{
        \begin{aligned}
            &\varUpsilon(P) := R + D^\top PD + D_0^\top PD_0,\quad \bar{\varUpsilon}(P,K) := \varUpsilon(P) + D_0^\top KD_0,\\
            &\varPhi(P) := B^\top P + D^\top PC + D_0^\top PC_0,\quad \bar{\varPhi}^\top(P,K) := \varPhi^\top(P) + KB + C_0^\top KD_0,\\
            &\varPsi(P,K) := B^\top K + D^\top PF + D_0^\top PF_0 + D_0^\top K\left(C_0 + F_0\right),\\
            &\varTheta(P,K,\varphi,\psi) := B^\top \varphi + D^\top P\sigma + D_0^\top \psi + D_0^\top P\sigma_0 + D_0^\top K\sigma_0.
        \end{aligned}
        \right.
    \end{equation*}
    In the above $\varUpsilon(P)$ emphasizes the dependence of $\varUpsilon$ on the matrix $P$, etc.

    If $$\mathcal{R}(B^\top) \cup \mathcal{R}(D^\top P) \cup \mathcal{R}(D_0^\top P) \subseteq \mathcal{R}(\varUpsilon(P)),$$
    $$\mathcal{R}(B^\top) \cup \mathcal{R}(D^\top P) \cup \mathcal{R}(D_0^\top (P+K)) \subseteq \mathcal{R}(\bar{\varUpsilon}(P,K)),$$
    then, we have
    \begin{equation}\label{barui}
        \bar{u}_i = -\varUpsilon^\dagger(P) \varPhi(P) \left(\bar{x}_i - \bar{x} \right) - \bar{\varUpsilon}^\dagger(P,K) \big[ (\varPhi(P) + \varPsi(P,K)) \bar{x} + \varTheta(P,K,\varphi,\psi) \big],\quad i = 1,\cdots,N,
    \end{equation}
    \begin{equation}\label{baru}
        \hspace{-6.5cm}\bar{u} = -\bar{\varUpsilon}^\dagger(P,K) \big[ (\varPhi(P) + \varPsi(P,K)) \bar{x} + \varTheta(P,K,\varphi,\psi) \big],
    \end{equation}
    where $M^\dagger$ denotes the Moore-Penrose pseudo inverse of a matrix $M$ (\cite{Penrose-55}).

    Continue comparing the coefficients of the drift terms, gives
    \begin{equation}\label{P}
        \left\{
        \begin{aligned}
            &\dot{P} + PA + A^\top P + C^\top PC + C_0^\top PC_0 - \varPhi^\top(P) \varUpsilon^\dagger(P) \varPhi(P) + Q = 0,\\
            &P(T) = H,
        \end{aligned}
        \right.
    \end{equation}
    \begin{equation}\label{K}
        \left\{
        \begin{aligned}
            &\dot{K} + K(A+G) + PG + A^\top K + \varPhi^\top(P) \varUpsilon^\dagger(P) \varPhi(P) - \bar{\varPhi}^\top(P,K) \bar{\varUpsilon}^\dagger(P,K) (\varPhi(P) + \varPsi(P,K))\\
            &\ + C^\top PF + C_0^\top PF_0 + C_0^\top K(C_0+F_0) - Q\varGamma = 0,\\
            &K(T) = - H\varGamma_0,
        \end{aligned}
        \right.
    \end{equation}
    \begin{equation}\label{varphi}
        \left\{
        \begin{aligned}
            d\varphi =& - \left[\left( A^\top - \bar{\varPhi}^\top(P,K) \bar{\varUpsilon}^\dagger(P,K) B^\top \right) \varphi + \left( C^\top - \bar{\varPhi}^\top(P,K) \bar{\varUpsilon}^\dagger(P,K) D^\top \right) P\sigma \right. \\
            &\quad + \left. \left( C_0^\top - \bar{\varPhi}^\top(P,K) \bar{\varUpsilon}^\dagger(P,K) D_0^\top \right)(\psi + (P+K) \sigma_0) + (P+K) f - Q\eta \right] dt + \psi dW_0,\\
            \varphi(T) =& -H\eta_0.
        \end{aligned}
        \right.
    \end{equation}

    For further analysis, we need to assume

    \noindent {\bf (A5)} Equations (\ref{P})-(\ref{varphi}) admit a set of solution $(P(\cdot), K(\cdot), \varphi(\cdot))$ such that $\varUpsilon(P) > 0$, $\bar{\varUpsilon}(P,K) > 0$, and $\mathcal{R}(B^\top) \cup \mathcal{R}(D^\top P) \cup \mathcal{R}(D_0^\top P) \subseteq \mathcal{R}(\varUpsilon(P))$, $\mathcal{R}(B^\top) \cup \mathcal{R}(D^\top P) \cup \mathcal{R}(D_0^\top (P+K)) \subseteq \mathcal{R}(\bar{\varUpsilon}(P,K))$.

    \begin{Remark}
        Note that (\ref{P}) is a symmetric Riccati differential equation, if it satisfies $Q(\cdot) \ge 0$, $\varUpsilon(P) > 0$, $H \ge 0$, then it admits a unique solution.
        (\ref{K}) is a non-symmetric Riccati equation, its solvability problem is challenging. For relevant details, refer to sources like \cite{Ma-Yong-99}, \cite{Freiling-Jank-Sarychev-00}, \cite{Freiling-02} and \cite{Kremer-Stefan-02}.
        If (\ref{P}) and (\ref{K}) admit solutions, then (\ref{varphi}) has a solution.
    \end{Remark}

    Inspired by the above discussion, we design
    \begin{equation}\label{hatui}
        \hat{u}_i = -\varUpsilon^\dagger(P) \varPhi(P) \left( \hat{x}_i - \bar{x} \right) - \bar{\varUpsilon}^\dagger(P,K) \big[ (\varPhi(P) + \varPsi(P,K)) \bar{x} + \varTheta(P,K,\varphi,\psi) \big],\quad i = 1,\cdots,N,
    \end{equation}
    where
    \begin{equation}\label{hatxi}
        \left\{
        \begin{aligned}
            d\hat{x}_i =& \Big[\left( A - B \varUpsilon^\dagger(P) \varPhi(P) \right) \hat{x}_i + G\hat{x}^{(N)} + \left( B \varUpsilon^\dagger(P) \varPhi(P) - B \bar{\varUpsilon}^\dagger(P,K)\right. \\
             &\quad \times(\varPhi(P) + \varPsi(P,K)) \Big)\bar{x}- B \bar{\varUpsilon}^\dagger(P,K) \varTheta(P,K,\varphi,\psi) + f\Big]dt \\
            &+ \Big[\left( C - D \varUpsilon^\dagger(P) \varPhi(P) \right) \hat{x}_i + F\hat{x}^{(N)} + \left( D \varUpsilon^\dagger(P) \varPhi(P) - D \bar{\varUpsilon}^\dagger(P,K)\right. \\
             &\quad \times(\varPhi(P) + \varPsi(P,K)) \Big)\bar{x}- D \bar{\varUpsilon}^\dagger(P,K) \varTheta(P,K,\varphi,\psi) + \sigma\Big]dW_i \\
            &+ \Big[\left( C_0 - D_0 \varUpsilon^\dagger(P) \varPhi(P) \right) \hat{x}_i + F_0\hat{x}^{(N)} + \left( D_0 \varUpsilon^\dagger(P) \varPhi(P) - D_0 \bar{\varUpsilon}^\dagger(P,K)\right.\\
             &\quad \times(\varPhi(P) + \varPsi(P,K)) \Big)\bar{x} - D_0 \bar{\varUpsilon}^\dagger(P,K) \varTheta(P,K,\varphi,\psi) + \sigma_0\Big]dW_0,\\
            \hat{x}_i(0) =&\ \xi_i,\quad i = 1,\cdots,N,
        \end{aligned}
        \right.
    \end{equation}
    \begin{equation}\label{barxu}
        \left\{
        \begin{aligned}
            d\bar{x} =& \left[\left( A + G - B \bar{\varUpsilon}^\dagger(P,K) (\varPhi(P) + \varPsi(P,K)) \right) \bar{x} - B \bar{\varUpsilon}^\dagger(P,K) \varTheta(P,K,\varphi,\psi) + f\right]dt \\
            & + \left[\left( C_0 + F_0 - D_0 \bar{\varUpsilon}^\dagger(P,K) (\varPhi(P) + \varPsi(P,K)) \right) \bar{x} - D_0 \bar{\varUpsilon}^\dagger(P,K) \varTheta(P,K,\varphi,\psi) + \sigma_0\right]dW_0, \\
            \bar{x}(0) =&\ \bar{\xi},
        \end{aligned}
        \right.
    \end{equation}
    and $\hat{x}^{(N)} := \frac{1}{N} \sum_{j=1}^{N} \hat{x}_i$.

    The following theorem shows the performance of the decentralized strategies.

    \begin{mythm}\label{NE}
        Assume that (A1)-(A5) hold. For problem {\bf (PD)}, $( \hat{u}_1(\cdot), \cdots, \hat{u}_N(\cdot))$ given in (\ref{hatui}) constitutes an $\epsilon$-Nash equilibrium, where $\epsilon = O\left(\frac{1}{\sqrt{N}}\right)$.
    \end{mythm}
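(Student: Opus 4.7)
The plan is to prove the $\epsilon$-Nash property by a three-way comparison through the limiting decentralized problem already implicit in (\ref{limit FBSDE})--(\ref{barx}). Namely, I introduce the limiting cost $\bar{J}_i(u_i(\cdot))$ obtained by replacing the empirical mean $x^{(N)}$ in (\ref{finite cost}) by the conditional-mean process $\bar{x}(\cdot)$ from (\ref{barxu}), and the state $x_i$ by the solution of (\ref{limit FBSDE})$_1$ driven by $u_i$. The target is then to show
\[
J_i^N(\hat{u}_i,\hat{u}_{-i}) \;\le\; \bar{J}_i(\bar{u}_i) + O(N^{-1/2}) \;\le\; \bar{J}_i(u_i) + O(N^{-1/2}) \;\le\; J_i^N(u_i,\hat{u}_{-i}) + O(N^{-1/2}),
\]
where the middle inequality is the optimality of $\bar{u}_i$ for the limiting problem (a consequence of (A4)--(A5) together with the variational identity used to derive (\ref{limit stationarity condition})), and the two outer inequalities are cost-approximation estimates to be proved.

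First I would establish the key empirical-mean convergence under the constructed strategy profile: under $(\hat{u}_1,\dots,\hat{u}_N)$,
\[
\mathbb{E}\Big[\sup_{0\le t\le T}\big\|\hat{x}^{(N)}(t)-\bar{x}(t)\big\|^2\Big] = O(1/N).
\]
The idea is to subtract (\ref{barxu}) from the average of (\ref{hatxi}) over $i$ and note that, conditional on $\mathcal{F}_T^0$, the processes $\hat{x}_i(\cdot)$ are exchangeable with i.i.d. initial data $\xi_i$ and independent idiosyncratic Brownian motions $W_i$. A conditional $L^2$ law of large numbers together with a Gronwall argument on the resulting linear SDE then yields the $1/N$ rate. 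Plugging this bound back into (\ref{finite cost}) and expanding the quadratic terms gives
\[
\big|J_i^N(\hat{u}_1,\dots,\hat{u}_N)-\bar{J}_i(\bar{u}_i)\big| = O(N^{-1/2}),
\]
the slower $\sqrt{N}$ rate coming from the cross terms that are linear in the $O(1/\sqrt{N})$ fluctuation of $\hat{x}^{(N)}-\bar{x}$.

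For the comparison with an arbitrary deviation $u_i(\cdot)\in\mathscr{U}_d^i[0,T]$, I would first reduce to $\mathbb{E}\int_0^T\|u_i(t)\|^2\,dt \le M$ uniformly in $N$: otherwise the $\|u_i\|_R^2$ term makes $J_i^N(u_i,\hat{u}_{-i})$ exceed $J_i^N(\hat{u}_i,\hat{u}_{-i})$ by more than any prescribed $\epsilon$, and the Nash inequality is trivial. For such bounded deviations, let $x_j^\prime$ denote the states when agent $i$ deviates; for $j\ne i$ the dynamics of $x_j^\prime$ differ from those of $\hat{x}_j$ only through a $1/N$ perturbation of the mean field, so standard SDE estimates give $\mathbb{E}\sup_t\|x_j^\prime-\hat{x}_j\|^2 = O(1/N^2)$, and consequently $\mathbb{E}\sup_t\|x^{\prime,(N)}(t)-\bar{x}(t)\|^2 = O(1/N)$. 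Replacing $x^{\prime,(N)}$ by $\bar{x}$ in $J_i^N(u_i,\hat{u}_{-i})$ up to $O(N^{-1/2})$ reduces it to $\bar{J}_i(u_i)$, and chaining the three inequalities yields $\epsilon=O(N^{-1/2})$. The step I expect to be most delicate is the conditional law of large numbers underlying the first estimate: because of the common noise $W_0$, $\hat{x}^{(N)}$ does not converge to a deterministic limit, and the $\mathcal{F}^0$-adapted inhomogeneities $f,\sigma_0,\eta,\eta_0$ preclude an unconditional LLN. The cleanest route is to freeze $W_0$ and apply a classical $L^2$ LLN to the conditionally independent fluctuations $\hat{x}_i-\mathbb{E}_0[\hat{x}_i]$, noting that $\mathbb{E}_0[\hat{x}_i]$ solves exactly (\ref{barxu}) and hence coincides with $\bar{x}$, so that integrating the conditional bound against $\mathbb{P}$ delivers the unconditional $O(1/N)$ rate.
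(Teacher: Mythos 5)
Your proposal is correct in substance but follows a genuinely different route from the paper. You use the classical three-inequality chain through an explicitly formulated limiting control problem $\bar{J}_i$ (approximate the equilibrium cost by the limit cost, invoke optimality of $\bar{u}_i$ for the limit problem, then approximate the deviator's cost from below), whereas the paper never isolates a limit problem: it writes $J^N_i(u_i,\hat{u}_{-i}) - J^N_i(\hat{u}_i,\hat{u}_{-i}) = \tilde{J}_i^N + \mathcal{I}_i^N$, observes that the purely quadratic part $\tilde{J}_i^N \ge 0$ directly by (A4), and kills the cross term $\mathcal{I}_i^N$ by applying It\^{o}'s formula to $\langle \tilde{x}_i, \bar{p}_i^i\rangle$ so that the stationarity condition (\ref{limit stationarity condition}) cancels the first-order terms, leaving only remainders controlled by $\sup_t\mathbb{E}\|\hat{x}^{(N)}-\bar{x}\|^2 = O(1/N)$ and $\sup_t\mathbb{E}\|\tilde{x}^{(N)}\|^2=O(1/N^2)$. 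Both arguments ultimately rest on exactly these two mean-field estimates and on the a priori $L^2$ bound for admissible deviations (the paper cites Lemma A.2 of \cite{Wang-Huang-19} where you argue by contradiction from the coercive $\|u_i\|^2_R$ term). What the paper's route buys is that it sidesteps any verification theorem for the limit problem; what your route buys is modularity and a cleaner conceptual picture. Two points in your write-up need tightening. First, the fluctuations $\hat{x}_i - \mathbb{E}_0[\hat{x}_i]$ are \emph{not} conditionally independent given $\mathcal{F}^0$, because the $\hat{x}_i$ are coupled through $\hat{x}^{(N)}$ in (\ref{hatxi}); the correct mechanism (and the one the paper uses) is that the idiosyncratic stochastic integrals $\frac{1}{N}\sum_j\int(\cdot)\,dW_j$ are mutually orthogonal, so the It\^{o} isometry gives an $O(1/N)$ forcing term, and Gronwall on the linear SDE for $\hat{x}^{(N)}-\bar{x}$ closes the estimate --- your identification $\mathbb{E}_0[\hat{x}_i]=\bar{x}$ is correct and is what makes that SDE homogeneous up to the $O(1/N)$ martingale. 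Second, the middle inequality (optimality of $\bar{u}_i$ for $\bar{J}_i$) is asserted but not proved; it does follow from (\ref{limit stationarity condition}) together with convexity of the limit problem, but that convexity must itself be extracted as the $N\to\infty$ limit of (\ref{convexity condition}) (noting $\tilde{x}^{i,(N)}=O(1/N)$) or from $\varUpsilon(P)>0$ in (A5), and you should say which. Neither point invalidates the argument, but both require a few lines you have not supplied.
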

    \begin{proof}
        By the Lemma A.2 in \cite{Wang-Huang-19}, we have for $u_i \in \mathcal{U}_c^i[0,T]$, if $J(u_i(\cdot), \hat{u}_{-i}(\cdot)) \le c_1$, then there exist an integer $N_0$ and a constant $c$ such that for $N \ge N_0$, $\mathbb{E}\int_0^T||u_i||^2 dt < c$, thus $\mathbb{E}\int_0^T||x_i||^2 dt < c$.
        Denote $\tilde{u}_i := u_i - \hat{u}_i$, $\tilde{x}_i := x_i - \hat{x}_i$, $\tilde{x}^{(N)} := \frac{1}{N}\sum_{j=1}^{N} \tilde{x}_i$.
        Then
        $$\mathbb{E}\int_0^T\left(||\tilde{x}_i||^2 + ||\tilde{u}_i||^2 \right)dt < \infty.$$
        It follows from (\ref{state}) and (\ref{hatxi}) that
        \begin{equation}
            \left\{
            \begin{aligned}
                d\tilde{x}_i =& \big[A\tilde{x}_i + B\tilde{u}_i + G\tilde{x}^{(N)}\big]dt + \big[C\tilde{x}_i + D\tilde{u}_i + F\tilde{x}^{(N)}\big]dW_i\\
                              & + \big[C_0\tilde{x}_i + D_0\tilde{u}_i + F_0\tilde{x}^{(N)}\big]dW_0,\\
                \tilde{x}_i(0) =&\ 0,
            \end{aligned}
            \right.
        \end{equation}
        \begin{equation}
            \left\{
            \begin{aligned}
                d\tilde{x}_j =& \left[A\tilde{x}_j + G\tilde{x}^{(N)}\right]dt + \left[C\tilde{x}_j + F\tilde{x}^{(N)}\right]dW_j + \left[C_0\tilde{x}_j + F_0\tilde{x}^{(N)}\right]dW_0,\\
                \tilde{x}_j(0) =&\ 0, \quad j \ne i,
            \end{aligned}
            \right.
        \end{equation}
        \begin{equation}
            \left\{
            \begin{aligned}\label{tildexN}
                d\tilde{x}^{(N)} =& \left[(A+G)\tilde{x}^{(N)} + \frac{B}{N}\tilde{u}_i\right]dt + \frac{1}{N}\sum_{j=1}^{N}\left[C\tilde{x}_j + F\tilde{x}^{(N)}\right]dW_j + \frac{D}{N}\tilde{u}_idW_i \\
                &+ \left[(C_0+F_0)\tilde{x}^{(N)} + \frac{D_0}{N}\tilde{u}_i\right]dW_0,\\
                \tilde{x}^{(N)}(0) =&\ 0,
            \end{aligned}
            \right.
        \end{equation}
        and
        \begin{equation*}
            \begin{aligned}
                &J^N_i(u_i(\cdot), \hat{u}_{-i}(\cdot)) - J^N_i(\hat{u}_i(\cdot), \hat{u}_{-i}(\cdot))\\
                =&\ \mathbb{E} \left\{\int_0^T \left[\big|\big|x_i - \varGamma x^{(N)} - \eta\big|\big|^2_Q + \big|\big|u_i\big|\big|^2_R\right]dt
                + \big|\big|x_i(T) - \varGamma_0 x^{(N)}(T) - \eta_0\big|\big|^2_H\right\}\\
                &\ - \mathbb{E} \left\{\int_0^T \left[\big|\big|\hat{x}_i - \varGamma \hat{x}^{(N)} - \eta\big|\big|^2_Q +\big|\big|\hat{u}_i\big|\big|^2_R\right]dt
                + \big|\big|\hat{x}_i(T) - \varGamma_0 \hat{x}^{(N)}(T) - \eta_0\big|\big|^2_H\right\} \\
                =&\ \mathbb{E} \left\{\int_0^T \left[\big|\big|\tilde{x}_i - \varGamma \tilde{x}^{(N)}\big|\big|^2_Q + \big|\big|\tilde{u}_i\big|\big|^2_R\right]dt + \big|\big|\tilde{x}_i(T) - \varGamma_0 \tilde{x}^{(N)}(T)\big|\big|^2_H\right\} \\
                & + 2 \mathbb{E} \bigg\{\int_0^T \left[\left(\tilde{x}_i - \varGamma \tilde{x}^{(N)}\right)^\top Q \left(\hat{x}_i - \varGamma \hat{x}^{(N)} - \eta\right) + \tilde{u}_i^\top R \hat{u}_i\right]dt \\
                & \qquad + \left(\tilde{x}_i(T) - \varGamma_0 \tilde{x}^{(N)}(T)\right)^\top H \left(\hat{x}_i(T) - \varGamma_0 \hat{x}^{(N)}(T) - \eta_0\right)\bigg\} .
            \end{aligned}
        \end{equation*}
        Denote
        \begin{equation*}
            \tilde{J}_i^N(\tilde{u}_i(\cdot), \hat{u}_{-i}(\cdot)) := \mathbb{E} \left\{\int_0^T \left[\big|\big|\tilde{x}_i - \varGamma \tilde{x}^{(N)}\big|\big|^2_Q + \big|\big|\tilde{u}_i\big|\big|^2_R\right]dt + \big|\big|\tilde{x}_i(T) - \varGamma_0 \tilde{x}^{(N)}(T)\big|\big|^2_H\right\},
        \end{equation*}
        \begin{equation*}
            \begin{aligned}
                \mathcal{I}_i^N :=&\ 2 \mathbb{E} \bigg\{\int_0^T \left[\left(\tilde{x}_i - \varGamma \tilde{x}^{(N)}\right)^\top Q \left(\hat{x}_i - \varGamma \hat{x}^{(N)} - \eta\right) + \tilde{u}_i^\top R \hat{u}_i\right]dt \\
                &\quad + \left(\tilde{x}_i(T) - \varGamma_0 \tilde{x}^{(N)}(T)\right)^\top H \left(\hat{x}_i(T) - \varGamma_0 \hat{x}^{(N)}(T) - \eta_0\right)\bigg\}.
            \end{aligned}
        \end{equation*}
        We have
        \begin{equation}
            J^N_i(u_i(\cdot), \hat{u}_{-i}(\cdot)) - J^N_i(\hat{u}_i(\cdot), \hat{u}_{-i}(\cdot)) = \tilde{J}_i^N(\tilde{u}_i(\cdot), \hat{u}_{-i}(\cdot)) + \mathcal{I}_i^N.
        \end{equation}
        By assumption (A4), $\tilde{J}_i^N(\tilde{u}_i(\cdot), \hat{u}_{-i}(\cdot)) \ge 0$, we only need to show that $\mathcal{I}_i^N = O\left(\frac{1}{\sqrt{N}}\right)$.

        Applying It\^{o}'s formula to $\langle\tilde{x}_i(\cdot), \bar{p}_i^i(\cdot)\rangle$, we have
        \begin{equation*}
            \begin{aligned}
                & \mathbb{E} \left[ \tilde{x}_i^\top(T)H\left( \bar{x}_i(T) - \varGamma_0\bar{x}(T) - \eta_0 \right) \right] \\
                =&\ \mathbb{E} \int_0^T \left\{\left[ A\tilde{x}_i + B\tilde{u}_i + G\tilde{x}^{(N)} \right]^\top \bar{p}_i^i - \tilde{x}_i^\top\left[A^\top \bar{p}_i^i + C^\top \bar{q}_i^{i,i} + C_0^\top \bar{q}_i^{i,0} + Q\left(\bar{x}_i - \varGamma\bar{x} - \eta\right)\right] \right.\\
                &\qquad \left.+ \left[ C\tilde{x}_i + D\tilde{u}_i + F\tilde{x}^{(N)} \right]^\top \bar{q}_i^{i,i} + \left[ C_0\tilde{x}_i + D_0\tilde{u}_i + F_0\tilde{x}^{(N)} \right]^\top \bar{q}_i^{i,0} \right\}dt\\
            =&\ \mathbb{E} \int_0^T \left[ -\tilde{x}_i^\top Q \left(\bar{x}_i - \varGamma\bar{x} - \eta\right) - \tilde{u}_i^\top R \bar{u}_i + \tilde{x}^{(N)\top}\left( G^\top \bar{p}_i^i + F^\top \bar{q}_i^{i,i} + F_0^\top \bar{q}_i^{i,0} \right) \right]dt.
            \end{aligned}
        \end{equation*}
        Hence,
        \begin{equation*}
            \begin{aligned}
                \mathcal{I}_i^N =&\ 2 \mathbb{E} \left\{\int_0^T \left[ \tilde{x}^{(N)\top} \left( G^\top \bar{p}_i^i + F^\top \bar{q}_i^{i,i} + F_0^\top \bar{q}_i^{i,0} - \varGamma^\top Q \left(\hat{x}_i - \varGamma \hat{x}^{(N)} - \eta\right) \right) \right.\right.\\
                &\qquad \qquad + \left.\tilde{x}_i^\top Q \left(\left(\hat{x}_i - \bar{x}_i\right) - \varGamma \left( \hat{x}^{(N)} - \bar{x} \right) \right) + \tilde{u}_i^\top R \left( \hat{u}_i - \bar{u}_i \right) \right]dt \\
                &\qquad -\tilde{x}^{(N)\top}(T) \varGamma_0^\top H \left(\hat{x}_i(T) - \varGamma_0 \hat{x}^{(N)}(T) - \eta_0\right) \\
                &\qquad +\tilde{x}_i^\top(T) H \left(  \hat{x}_i(T) - \bar{x}_i(T)  - \varGamma_0 \left( \hat{x}^{(N)}(T) - \bar{x}(T) \right) \right) \bigg\}.
            \end{aligned}
        \end{equation*}
        From (\ref{tildexN}),
        \begin{equation*}
            \begin{aligned}
                \tilde{x}^{(N)}(t) &= \int_0^t e^{(A+G)(t-\tau)}\bigg\{ \frac{B}{N}\tilde{u}_i dt + \frac{1}{N}\sum_{j=1}^{N}\left[C\tilde{x}_j + F\tilde{x}^{(N)}\right]dW_j \\
                & \qquad + \frac{D}{N}\tilde{u}_idW_i + \left[(C_0+F_0)\tilde{x}^{(N)} + \frac{D_0}{N}\tilde{u}_i\right]dW_0 \bigg\},\\
               \sup_{0 \le t \le T} \mathbb{E}[\tilde{x}^{(N)}(t)]^2 &\le \frac{c}{N^2} \sup_{0 \le t \le T} \mathbb{E} \left[ \int_0^t \tilde{u}_i(\tau)d\tau \right]^2 = O\left(\frac{1}{N^2}\right).
            \end{aligned}
        \end{equation*}
        From (\ref{hatxi}), (\ref{limit FBSDE}) and (\ref{barx}), we get
        \begin{equation*}\left\{
            \begin{aligned}
                d(\hat{x}^{(N)} - \bar{x}) =& \left[ (A + G - B\varUpsilon^\dagger(P) \varPhi(P))(\hat{x}^{(N)} - \bar{x}) \right]dt\\
                & + \frac{1}{N} \sum_{j=1}^{N} \left[ C\hat{x}_j + D\hat{u}_j + F\hat{x}^{(N)} + \sigma \right]dW_j \\
                & + \left[ (C_0 + F_0 - D_0\varUpsilon^\dagger(P) \varPhi(P))(\hat{x}^{(N)} - \bar{x}) \right]dW_0 ,\\
                 \hat{x}^{(N)}(0) - \bar{x}(0) =&\ \frac{1}{N} \sum_{j=1}^{N} \xi_j - \bar{\xi}.
            \end{aligned}\right.
        \end{equation*}
        Thus
        \begin{equation*}
            \begin{aligned}
                \hat{x}^{(N)}(t) - \bar{x}(t) =&\ e^{(A+G- B\varUpsilon^\dagger(P) \varPhi(P))t}\Bigg(\frac{1}{N} \sum_{j=1}^{N} \xi_j - \bar{\xi}\Bigg) + \int_0^t e^{(A+G- B\varUpsilon^\dagger(P) \varPhi(P))(t-\tau)}\\
                & \times\Bigg\{ \frac{1}{N}\sum_{j=1}^{N}\big[\cdots\big]dW_j + \big[(\cdots)(\hat{x}^{(N)} - \bar{x})\big]dW_0 \Bigg\},\\
                \mathbb{E} \left[ \hat{x}^{(N)}(t) - \bar{x}(t) \right]^2 \le&\ \frac{c}{N} + c \mathbb{E} \left[ \int_0^T \left(\hat{x}^{(N)} - \bar{x}\right)^2 dt \right].
            \end{aligned}
        \end{equation*}
        By Gronwall's inequality, we have
        \begin{equation*}
            \mathbb{E} \left[ \hat{x}^{(N)}(t) - \bar{x}(t) \right]^2 \le \frac{c}{N}.
        \end{equation*}
        Thus
        \begin{equation*}
            \sup_{0 \le t \le T} \mathbb{E} \left[ \hat{x}^{(N)}(t) - \bar{x}(t) \right]^2 = O\left(\frac{1}{N}\right),
        \end{equation*}
        and then
        \begin{equation*}
            \sup_{0 \le t \le T} \mathbb{E} \left[ \hat{x}_i(t) - \bar{x}_i(t) \right]^2 = O\left(\frac{1}{N}\right).
        \end{equation*}
        Hence $\mathcal{I}_i^N = O\left(\frac{1}{\sqrt{N}}\right)$.
        The proof is complete.
    \end{proof}

    \subsection{The infinite-horizon problem}

    For the infinite-horizon case, we will only consider the time-invariant coefficients $A$, $B$, $G$, $C$, $D$, $F$, $C_0$, $D_0$, $F_0$, $Q$, $R$, $\varGamma$. That is the state of the $i$th agent is
    \begin{equation}\label{infinite state}
        \left\{
        \begin{aligned}
            dx_i(t) =& \big[Ax_i(t) + Bu_i(t) + Gx^{(N)}(t) + f(t)\big]dt\\
                     & + \big[Cx_i(t) + Du_i(t) + Fx^{(N)}(t) + \sigma(t)\big]dW_i(t)\\
                     & + \big[C_0x_i(t) + D_0u_i(t) + F_0x^{(N)}(t) + \sigma_0(t)\big]dW_0(t),\quad t\geq0,\\
            x_i(0) =&\ \xi_i ,
        \end{aligned}
        \right.
    \end{equation}
    and his/her cost functional is given by
    \begin{equation}\label{infinite cost}
        \begin{aligned}
            \bar{J}^N_i(u_i(\cdot), u_{-i}(\cdot)) = \mathbb{E} \int_0^\infty \left[\big|\big|x_i(t) - \varGamma x^{(N)}(t) - \eta(t)\big|\big|^2_{Q} +\big|\big|u_i(t)\big|\big|^2_{R} \right]dt,
        \end{aligned}
    \end{equation}
    where $Q \ge 0$ and $R > 0$, $f(\cdot),\sigma(\cdot),\sigma_0(\cdot) \in L^2_{\mathcal{F}^0}(0,\infty;\mathbb{R}^n)$.

    We study the following infinite-horizon problems.

    \noindent {\bf (PD')}: Find an $\epsilon$-Nash equilibrium strategy $u^{*N}(\cdot) := (u_1^*(\cdot),\cdots,u_N^*(\cdot))$ among $u^N(\cdot) \in \mathscr{U}_{d}[0,\infty)$ for (\ref{infinite cost}), subject to (\ref{infinite state}).

    Motivated by the discussion on finite-horizon problems in the previous section, we design the decentralized strategies
    \begin{equation}\label{hatui'}
        \hat{u}_i = -\varUpsilon^{-1}(P) \varPhi(P) \left( \hat{x}_i - \bar{x} \right) - \bar{\varUpsilon}^{-1}(P,K) \big[ (\varPhi(P) + \varPsi(P,K)) \bar{x} + \varTheta(P,K,\varphi,\psi) \big],
    \end{equation}
    $i = 1,\cdots,N$, where
    \begin{equation}\label{ARE}
        \left\{
        \begin{aligned}
            &PA + A^\top P + C^\top PC + C_0^\top PC_0 - \varPhi^\top(P) \varUpsilon^{-1}(P) \varPhi(P) + Q = 0,\\
            &K(A+G) + PG + A^\top K + \varPhi^\top(P) \varUpsilon^{-1}(P) \varPhi(P) - \bar{\varPhi}^\top(P,K) \bar{\varUpsilon}^{-1}(P,K) (\varPhi(P) + \varPsi(P,K))\\
            & + C^\top PF + C_0^\top PF_0 + C_0^\top K(C_0+F_0) - Q \varGamma = 0,\\
            &d\varphi = - \left[\left( A^\top - \bar{\varPhi}^\top(P,K) \bar{\varUpsilon}^{-1}(P,K) B^\top \right) \varphi + \left( C^\top - \bar{\varPhi}^\top(P,K) \bar{\varUpsilon}^{-1}(P,K) D^\top \right) P\sigma \right. \\
            &\qquad + \left. \left( C_0^\top - \bar{\varPhi}^\top(P,K) \bar{\varUpsilon}^{-1}(P,K) D_0^\top \right)(\psi + (P+K) \sigma_0) + (P+K) f - Q\eta \right] dt + \psi dW_0,\\
            &d\bar{x} = \left[\left( A + G - B \bar{\varUpsilon}^{-1}(P,K) (\varPhi(P) + \varPsi(P,K)) \right) \bar{x} - B \bar{\varUpsilon}^{-1}(P,K) \varTheta(P,K,\varphi,\psi) + f\right]dt \\
            &\qquad + \big[\left( C_0 + F_0 - D_0 \bar{\varUpsilon}^{-1}(P,K) (\varPhi(P) + \varPsi(P,K)) \right) \bar{x}\\
            &\qquad\quad - D_0 \bar{\varUpsilon}^{-1}(P,K) \varTheta(P,K,\varphi,\psi) + \sigma_0\big]dW_0,\qquad \bar{x}(0) = \bar{\xi},
        \end{aligned}
        \right.
    \end{equation}

    The following assumption is indispensable for the subsequent analysis.

    \noindent {\bf (A6)} The equation (\ref{ARE}) admits a set of stabilizing solution $(P(\cdot), K(\cdot), \varphi(\cdot), \psi(\cdot),\bar{x}(\cdot))$.

    We further analyze the sufficient conditions for assumption {\bf (A6)} to hold, for which we first introduce some definitions from \cite{Zhang-Chen-04}, \cite{Chen-Zhang-04}, \cite{Zhang-Zhang-Chen-08}.
    Consider the linear controlled stochastic system
    \begin{align}
            dx(t) =&\ \big[Ax(t)+Bu(t)\big]dt + \big[Cx(t)+Du(t)\big]dW(t) \nonumber \\
                   &\ + \big[C_0x(t)+D_0u(t)\big]dW_0(t),\quad t\geq0,\quad x(0)=x_0, \label{state'} \\
            y(t) =&\ \varXi x(t),\quad t\geq0,\label{observe}
    \end{align}
    where $y(\cdot) \in \mathbb{R}^n$ is the observation process with $\varXi\in\mathbb{R}^{n\times n}$, $x_0 \in \mathbb{R}^n$ is the initial state.

    \begin{mydef}
        The system (\ref{state'}) with $u(\cdot)\equiv0$ (or denoted by $[A,C,C_0]$) is said to be mean-square stable if for any $x_0$, there exists $c>0$ such that $\mathbb{E}\int_0^\infty||x(t)||^2dt \le c$.
    \end{mydef}

    \begin{mydef}
        The system (\ref{state'}) (or denoted by $[A,C,C_0;B,D,D_0]$) is said to be stabilizable if there exists a state feedback control $u(\cdot) = \varLambda x(\cdot)$, $\varLambda\in\mathbb{R}^{n\times n}$, such that for any $x_0$, the closed-loop system
        \begin{equation*}
            \begin{aligned}
                dx(t) &= (A+B\varLambda)x(t)dt + (C+D\varLambda)x(t)dW(t)\\
                      &\quad + (C_0+D_0\varLambda)x(t) dW_0(t),\quad t\geq0,\quad x(0)=x_0
            \end{aligned}
        \end{equation*}
        is mean-square stable.
    \end{mydef}

    \begin{mydef}
        The system (\ref{state'})-(\ref{observe}) with $u(\cdot)\equiv0$ (or denoted by $[A,C,C_0;\varXi]$) is said to be exactly detectable if $y(t) = \varXi x(t) \equiv 0$, a.s. $t \in [0,T],\forall T>0$, implies $\lim_{t \to \infty}\mathbb{E}[x(t)]^2=0$.
    \end{mydef}

    \begin{mydef}
        The system (\ref{state'})-(\ref{observe}) with $u(\cdot)\equiv0$ is said to be exactly observable if $y(t) = \varXi x(t) \equiv 0$, a.s. $t \in [0,T],\forall T>0$, implies $x_0=0$.
    \end{mydef}

    The following proposition gives some conditions to ensure that {\bf (A6)} holds.
    \begin{mypro}
        Assume the system $[A,C,C_0;B,D,D_0]$ is stabilizable, $[A,C,C_0;\sqrt{Q}]$ is exactly detectable (exactly observable), and the second equation of $K(\cdot)$ in (\ref{ARE}) admits a stabilizing solution, then {\bf (A6)} holds.
    \end{mypro}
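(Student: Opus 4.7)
The plan is to verify the components of \textbf{(A6)} one at a time, in the order: stabilizing $P$, stabilizing $K$, mean-square stable forward SDE for $\bar{x}$, and $L^2$-solvable BSDE for $(\varphi,\psi)$ on $[0,\infty)$.

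First, I would invoke the classical theorem from \cite{Zhang-Chen-04} and \cite{Chen-Zhang-04}: under the stabilizability of $[A,C,C_0;B,D,D_0]$ together with the exact detectability (or observability) of $[A,C,C_0;\sqrt{Q}]$, the symmetric algebraic Riccati equation in the first line of (\ref{ARE}) admits a unique stabilizing solution $P\ge 0$. Here stabilizing means that the closed-loop system $[A - B\varUpsilon^{-1}(P)\varPhi(P),\ C - D\varUpsilon^{-1}(P)\varPhi(P),\ C_0 - D_0\varUpsilon^{-1}(P)\varPhi(P)]$ is mean-square stable; in particular $\varUpsilon(P) = R + D^\top P D + D_0^\top P D_0 > 0$, so that $\varUpsilon^{-1}(P)$ is well-defined.

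Second, with this $P$ fixed, the hypothesis directly provides the stabilizing $K$ for the non-symmetric Riccati equation in the second line of (\ref{ARE}). The stabilizing property of $K$ guarantees that the drift coefficient $A + G - B\bar{\varUpsilon}^{-1}(P,K)(\varPhi(P)+\varPsi(P,K))$ and the $W_0$-diffusion coefficient $C_0 + F_0 - D_0\bar{\varUpsilon}^{-1}(P,K)(\varPhi(P)+\varPsi(P,K))$ render the $\bar{x}$-dynamics mean-square stable. Combined with $f,\sigma_0\in L^2_{\mathcal{F}^0}(0,\infty;\mathbb{R}^n)$, the fifth line of (\ref{ARE}) then admits a unique $\mathcal{F}^0_t$-adapted solution $\bar{x}\in L^2_{\mathcal{F}^0}(0,\infty;\mathbb{R}^n)$ by standard Lyapunov estimates for linear SDEs with stable coefficients and square-integrable forcing.

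Third, I would obtain $(\varphi,\psi)$ by a finite-horizon truncation. For each $T>0$, solve the linear BSDE in the third line of (\ref{ARE}) on $[0,T]$ with zero terminal condition to get $(\varphi^T,\psi^T)$; then apply It\^o's formula to $\|\varphi^T(t)\|^2$ and exploit a Lyapunov inequality dual to the mean-square stability established in step two to produce a $T$-uniform bound $\mathbb{E}\int_0^T (\|\varphi^T\|^2 + \|\psi^T\|^2)\,dt \le c$. A Cauchy argument on differences $\varphi^{T_2}-\varphi^{T_1}$ together with square integrability of the inhomogeneous data $f,\sigma,\sigma_0,\eta$ then yields an $L^2$ solution on the half-line. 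The quintuple $(P,K,\varphi,\psi,\bar{x})$ so obtained is the stabilizing solution demanded by \textbf{(A6)}.

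The main obstacle I expect is step three: the coefficient $A^\top - \bar{\varPhi}^\top(P,K)\bar{\varUpsilon}^{-1}(P,K)B^\top$ governing $\varphi$ is not the literal transpose of the drift in the $\bar{x}$-equation, and the $W_0$-contribution couples $\psi$ with the combination $(P+K)\sigma_0$. Converting the stabilizing property of the $K$-equation into $L^2$-solvability of the BSDE therefore requires identifying the correct matrix Lyapunov identity, likely involving a weighted combination of $P$ and $K$, and then verifying that the residual inhomogeneous terms are controlled by the infinite-horizon $L^2$ norms of $f,\sigma,\sigma_0,\eta$. Once that Lyapunov identity is in hand, the remaining estimates are routine.
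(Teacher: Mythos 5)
Your skeleton matches the paper's: the symmetric ARE for $P$ is handled by the classical stabilizability/exact-detectability theorem (the paper cites \cite{AitRami-Zhou-00} rather than \cite{Zhang-Chen-04}, but the content is the same), the stabilizing $K$ is taken from the hypothesis, and the remaining two equations are then solved. Two points, however. First, your ordering of the last two steps is backwards: the forward equation for $\bar{x}$ (the fourth line of (\ref{ARE})) has inhomogeneous term $-B\bar{\varUpsilon}^{-1}(P,K)\varTheta(P,K,\varphi,\psi)+f$, and $\varTheta$ contains $B^\top\varphi + D_0^\top\psi$, so you cannot assert square-integrable forcing for the $\bar{x}$-dynamics before you have produced $(\varphi,\psi)\in L^2$. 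The paper solves the BSDE first and only then the $\bar{x}$-equation; your argument as written has a circular dependency, though it is repaired simply by swapping the two steps.

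Second, the step you flag as the main obstacle --- $L^2$-solvability of the infinite-horizon BSDE for $(\varphi,\psi)$ --- is exactly the step the paper does not prove by hand: it invokes \cite{Sun-Yong-18}, where the solvability of such linear BSDEs on $[0,\infty)$ under a stabilizing Riccati solution is established. Your finite-horizon truncation plan is the standard route to that result, but as presented it is a sketch with an acknowledged hole (you do not exhibit the Lyapunov identity that converts the stabilizing property of the $K$-equation into the a priori bound on $\varphi^T$, and you correctly note that the drift $A^\top - \bar{\varPhi}^\top\bar{\varUpsilon}^{-1}B^\top$ is not the transpose of the $\bar{x}$-drift). So your proposal is a reasonable self-contained replacement for the citation, but it is not complete; either finish the Lyapunov computation or cite the infinite-horizon BSDE result as the paper does.
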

    \begin{proof}
        Note that the first equation in (\ref{ARE}) is a symmetric {\it algebraic Riccati equation} (ARE).
        If the system $[A,C,C_0;B,D,D_0]$ is stabilizable, and $[A,C,C_0;\sqrt{Q}]$ is exactly detectable (exactly observable), then there exists a unique stabilizing solution $P(\cdot) \ge 0$ ($P(\cdot) > 0$) to the first equation in (\ref{ARE}) (\cite{AitRami-Zhou-00}).
        If the second equation in (\ref{ARE}) admits a stabilizing solution $\varPi(\cdot)$, then by \cite{Sun-Yong-18}, the third equation in (\ref{ARE}) has a stabilizing solution $(\varphi(\cdot),\psi(\cdot))$.
        Thus the fourth equation in (\ref{ARE}) exists a stabilizing solution $\bar{x}(\cdot)$.
    \end{proof}

    The second equation of $K(\cdot)$ in (\ref{ARE}) is a non-symmetric ARE, whose solvability is challenging.
    Interested readers may refer to \cite{Juang-Lin-99}, \cite{Guo-02}, \cite{Engwarda-05} and \cite{Ma-Lu-16}.
    Our analysis is provided below.
    Denote $\varPi(\cdot) := P(\cdot) + K(\cdot)$.
    Then from (\ref{ARE}), $\varPi(\cdot)$ satisfies
    \begin{equation}\label{Pi}
        \begin{aligned}
            &\varPi(A+G) + A^\top \varPi + C^\top P(C+F) + C_0^\top \varPi (C_0 + F_0) - \left(\varPi B + C^\top PD + C_0^\top \varPi D_0\right)\\
            \times &\left(R + D^\top PD + D_0^\top \varPi D_0\right)^{-1} \left(B^\top \varPi + D^\top P(C+F) + D_0^\top \varPi (C_0 + F_0)\right) + Q(I-\varGamma) = 0.
        \end{aligned}
    \end{equation}
    If the first equation in (\ref{ARE}) admits a stabilizing solution $P(\cdot)$ and (\ref{Pi}) admits a stabilizing solution $\varPi(\cdot)$, then the second equation in (\ref{ARE}) admits a stabilizing solution $K(\cdot) = \varPi(\cdot) - P(\cdot)$.
    So we only need to analyze the solvability of (\ref{Pi}) under the assumption that the first equation in (\ref{ARE}) is solvable.

    \begin{mypro}
        Assume that (\ref{Pi}) is symmetric, i.e., $G = lI$ with scalar $l$, $F = 0$, $F_0 = 0$ (no mean field term in the diffusion terms) and $\varGamma = \gamma I$ with scalar $\gamma$ such that $1-\gamma \ge 0$.
        If the system $[A,C_0;B,D_0]$ is stabilizable, $\big[A,C_0;\sqrt{Q(I-\varGamma)}\big]$ is exactly detectable (exactly observable), then (\ref{Pi}) admits a stabilizing solution $\varPi(\cdot)$.
    \end{mypro}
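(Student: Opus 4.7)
The plan is to reduce (\ref{Pi}), under the given assumptions, to a symmetric stochastic algebraic Riccati equation of standard form, and then invoke the existence theorem for generalized stochastic AREs (e.g.\ \cite{AitRami-Zhou-00}). First, substituting $G=lI$, $F=F_0=0$ and $\varGamma=\gamma I$ into (\ref{Pi}) removes the mean-field coupling in the diffusion and collapses the $Q(I-\varGamma)$ term to $(1-\gamma)Q$. Since $lI$ commutes with any symmetric $\varPi$, the linear part $\varPi(A+lI)+A^\top\varPi$ rewrites as $\varPi\tilde A+\tilde A^\top\varPi$ with $\tilde A:=A+\tfrac{l}{2}I$, so the resulting equation is intrinsically symmetric.

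Next I would recognise the simplified equation as the ARE of an auxiliary stochastic LQ problem with a cross term in the running cost:
\begin{equation*}
dx=[\tilde A x+Bu]\,dt+[C_0 x+D_0 u]\,dW_0,\qquad J=\mathbb{E}\int_0^\infty\bigl[x^\top\tilde Q x+2x^\top\tilde S u+u^\top\tilde R u\bigr]dt,
\end{equation*}
where $\tilde Q:=(1-\gamma)Q+C^\top PC$, $\tilde S:=C^\top PD$, $\tilde R:=R+D^\top PD$. Using $P\ge 0$ (supplied by the first ARE in (\ref{ARE})), $Q\ge 0$, $R>0$ and $1-\gamma\ge 0$,
\begin{equation*}
\begin{pmatrix}\tilde Q&\tilde S\\ \tilde S^\top&\tilde R\end{pmatrix}=\begin{pmatrix}(1-\gamma)Q&0\\ 0&R\end{pmatrix}+\begin{pmatrix}C^\top\\ D^\top\end{pmatrix}P\begin{pmatrix}C&D\end{pmatrix}\ge 0,
\end{equation*}
so $\tilde R>0$ and, by Schur complement, $\hat Q:=\tilde Q-\tilde S\tilde R^{-1}\tilde S^\top\ge 0$. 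The feedback $\hat u:=u+\tilde R^{-1}\tilde S^\top x$ eliminates the cross term and produces an equivalent cross-term-free LQ problem on the system $[\hat A,\hat C_0;B,D_0]$ with weights $(\hat Q,\tilde R)$, where $\hat A:=\tilde A-B\tilde R^{-1}\tilde S^\top$ and $\hat C_0:=C_0-D_0\tilde R^{-1}\tilde S^\top$.

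Finally I would conclude by invoking the generalized ARE existence theorem of \cite{AitRami-Zhou-00}. State feedback preserves both stabilizability and exact detectability/observability, so the assumptions that $[A,C_0;B,D_0]$ is stabilizable and $[A,C_0;\sqrt{Q(I-\varGamma)}]=[A,C_0;\sqrt{(1-\gamma)Q}]$ is exactly detectable (observable) translate, once the shift $A\mapsto\tilde A$ and the square-completing feedback are absorbed, into the corresponding properties of $[\hat A,\hat C_0;B,D_0]$ and $[\hat A,\hat C_0;\sqrt{\hat Q}]$. Ait Rami--Zhou's theorem then produces a unique stabilizing solution $\varPi\ge 0$ (strictly positive under exact observability), which, after undoing the feedback change of variable, is the required stabilizing solution of (\ref{Pi}).

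The main obstacle is the transfer of stabilizability and detectability from the base system to the transformed one. The square-completion is a genuine feedback change of variable and automatically preserves both properties; the shift $A\mapsto A+\tfrac{l}{2}I$, however, is a pure dynamical perturbation of the drift and need not preserve mean-square stability in general. It is precisely the scalar assumptions $G=lI$ and $\varGamma=\gamma I$ that keep this shift compatible with the feedback structure and allow the hypotheses of the proposition to carry through; writing this transfer out carefully is the part of the argument that requires the most attention.
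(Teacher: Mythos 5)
You take essentially the same route as the paper: substitute $G=lI$, $F=F_0=0$, $\varGamma=\gamma I$ into (\ref{Pi}), observe that the resulting equation is a symmetric generalized stochastic ARE, and invoke \cite{AitRami-Zhou-00}. The paper's proof consists of exactly these two steps and nothing more, so the extra work you do --- recognizing the reduced equation as the ARE of an auxiliary stochastic LQ problem with cross weight $\tilde S=C^\top PD$, and verifying via a Schur-complement argument that $\hat Q:=(1-\gamma)Q+C^\top PC-C^\top PD(R+D^\top PD)^{-1}D^\top PC\ge 0$ --- is a genuine filling-in of what the citation actually requires, not a detour. Two caveats, however. First, your blanket claim that state feedback preserves exact detectability/observability is not true in general (it preserves stabilizability, but feedback through $B$ can hide a mode from the output map); what partially saves you here is the inequality $\hat Q\ge(1-\gamma)Q$, yet even then the closed-loop matrices $\hat A$, $\hat C_0$ differ from $A$, $C_0$, so the transfer still needs an explicit argument. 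Second, the drift shift $A\mapsto A+\frac{l}{2}I$ that you flag is not actually resolved by the scalar structure of $G$ and $\varGamma$: for $l>0$, stabilizability of $[A,C_0;B,D_0]$ does not imply stabilizability of $[A+\frac{l}{2}I,C_0;B,D_0]$. Both of these holes are present in the paper's own proof, which quotes hypotheses on the unshifted, cross-term-free data $[A,C_0;B,D_0]$ and $[A,C_0;\sqrt{Q(I-\varGamma)}]$ while applying \cite{AitRami-Zhou-00} to an equation whose drift is $A+\frac{l}{2}I$ and whose weights contain $C^\top PC$ and $C^\top PD$; to be airtight the hypotheses should be imposed on the shifted, feedback-corrected pair (or one should restrict to $l\le 0$). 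So you have not introduced a new gap --- you have made the existing one visible.
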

    \begin{proof}
        Under the assumptions, (\ref{Pi}) turns into
        \begin{equation*}
            \begin{aligned}
                &\varPi\left(A+\frac{l}{2}\right) + \left(A+\frac{l}{2}I\right)^\top \varPi + C^\top PC + C_0^\top \varPi C_0 - \left(\varPi B + C^\top PD + C_0^\top \varPi D_0\right)\\
                \times &\left(R + D^\top PD + D_0^\top \varPi D_0\right)^{-1} \left(B^\top \varPi + D^\top PC + D_0^\top \varPi C_0\right) + Q(I-\gamma I) = 0,
            \end{aligned}
        \end{equation*}
        we can see that it is symmetric.
        If the system $[A,C_0;B,D_0]$ is stabilizable, and $\big[A,C_0;\sqrt{Q(I-\varGamma)}\big]$ is exactly detectable (exactly observable), then there exists a unique stabilizing solution $\varPi(\cdot) \ge 0$ ($\varPi(\cdot) > 0$) to (\ref{Pi}) (\cite{AitRami-Zhou-00}).
    \end{proof}

    \begin{mypro}\label{c-splitting}
        {\bf(a)} Assume that $C_0 = kI$ with scalar $k$ and $D_0 = 0$, then (\ref{Pi}) admits a stabilizing solution $\varPi(\cdot)$ if and only if
        \begin{equation*}
            \mathcal{M}_a := \begin{bmatrix}
                A+G-B\tilde{\varUpsilon}^{-1}D^\top P(C+F)+k(C_0+F_0) & -B\tilde{\varUpsilon}^{-1}B^\top \\ C^\top PD\tilde{\varUpsilon}^{-1}D^\top P(C+F) - C^\top P(C+F) - Q(I-\varGamma) & -A^\top + C^\top PD\tilde{\varUpsilon}^{-1}B^\top
            \end{bmatrix},
        \end{equation*}
        where $\tilde{\varUpsilon} := R+D^\top PD$, is $(n,n)\ c$-splitting, i.e., both the open left half-plane and the open right half-plane contain $n$ eigenvalues.

        \noindent{\bf(b)} Assume that $C_0+F_0 = jI$ with scalar $j$ and $D_0 = 0$, then (\ref{Pi}) admits a stabilizing solution $\varPi(\cdot)$ if and only if
        \begin{equation*}
            \mathcal{M}_b := \begin{bmatrix}
                A+G-B\tilde{\varUpsilon}^{-1}D^\top P(C+F) & -B\tilde{\varUpsilon}^{-1}B^\top \\ C^\top PD\tilde{\varUpsilon}^{-1}D^\top P(C+F) - C^\top P(C+F) - Q(I-\varGamma) & -A^\top + C^\top PD\tilde{\varUpsilon}^{-1}B^\top - jC_0
            \end{bmatrix},
        \end{equation*}
        is $(n,n)\ c$-splitting.
    \end{mypro}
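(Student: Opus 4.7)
The plan is to recognize equation (\ref{Pi}), under the stated simplifying assumptions, as a classical non-symmetric algebraic Riccati equation whose solvability is encoded in the spectral structure of the matrix $\mathcal{M}_a$ (resp.\ $\mathcal{M}_b$). For part (a), I would first substitute $C_0 = kI$ and $D_0 = 0$ into (\ref{Pi}). The $D_0$-dependent terms drop out of $\tilde{\varUpsilon}$ and of the off-diagonal entries, while $C_0 = kI$ commutes with $\varPi$, so the cross term $C_0^{\top}\varPi(C_0+F_0)$ collapses to $\varPi \cdot k(C_0+F_0)$ and is absorbed into the right coefficient of $\varPi$. After grouping, the Riccati takes the canonical non-symmetric form
\begin{equation*}
    \varPi \mathcal{A}_1 - \mathcal{A}_2 \varPi - \varPi \mathcal{S} \varPi + \mathcal{Q}_a = 0,
\end{equation*}
with $\mathcal{A}_1 = A+G+k(C_0+F_0) - B\tilde{\varUpsilon}^{-1}D^{\top}P(C+F)$, $\mathcal{A}_2 = -A^{\top} + C^{\top}PD\tilde{\varUpsilon}^{-1}B^{\top}$, $\mathcal{S} = B\tilde{\varUpsilon}^{-1}B^{\top}$, and $\mathcal{Q}_a$ equal to the negative of the $(2,1)$ block of $\mathcal{M}_a$. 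A direct inspection then identifies $\mathcal{M}_a$ as the associated Hamiltonian-type matrix, with block entries $\mathcal{A}_1, -\mathcal{S}, -\mathcal{Q}_a, \mathcal{A}_2$.

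Next, I would invoke the standard invariant-subspace correspondence from non-symmetric Riccati theory (see, e.g., \cite{Freiling-02}, \cite{Kremer-Stefan-02}): any solution $\varPi$ of the Riccati corresponds to an $n$-dimensional invariant subspace of $\mathcal{M}_a$, representable as the column range of a $2n \times n$ block matrix with invertible upper block $X$ and lower block $Y$, via $\varPi = YX^{-1}$; the restriction of $\mathcal{M}_a$ to this subspace is similar to the closed-loop matrix $\mathcal{A}_1 - \mathcal{S}\varPi$. Hence $\varPi$ is stabilizing precisely when the chosen invariant subspace corresponds to the $n$ eigenvalues of $\mathcal{M}_a$ lying in the open left half-plane. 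For the sufficiency direction, the $(n,n)$ $c$-splitting supplies a unique $n$-dimensional stable invariant subspace; the main technical obstacle is verifying that its upper block $X$ is nonsingular. I would argue by contradiction: a nonzero $v \in \ker X$ would generate, by subspace invariance, an unstable mode trapped in $\ker X$, contradicting the stabilizability hypothesis built into \textbf{(A6)}. Setting $\varPi = YX^{-1}$ then furnishes the required stabilizing solution.

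For the necessity direction, given a stabilizing $\varPi$, I would apply the similarity $T = \begin{bmatrix} I & 0 \\ \varPi & I \end{bmatrix}$; using the Riccati identity to cancel the $(2,1)$ block yields the block-triangular reduction
\begin{equation*}
    T^{-1}\mathcal{M}_a T = \begin{bmatrix} \mathcal{A}_1 - \mathcal{S}\varPi & -\mathcal{S} \\ 0 & \mathcal{A}_2 + \varPi \mathcal{S} \end{bmatrix},
\end{equation*}
whose top-left block is Hurwitz by the stabilizing property. To complete the $c$-splitting I must show that $\mathcal{A}_2 + \varPi\mathcal{S}$ carries $n$ eigenvalues in the open right half-plane; this is the subtler half of the argument and follows from a duality/adjoint computation identifying the transpose of this block with the closed-loop generator of the adjoint ``anti-stabilizing'' Riccati flow, whose spectrum is confined to the RHP by the detectability clause implicit in \textbf{(A6)} together with uniqueness of the stabilizing solution. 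Part (b) is handled along the same route: under $C_0+F_0 = jI$ and $D_0 = 0$, the only structural change is that $C_0^{\top}\varPi(C_0+F_0) = jC_0^{\top}\varPi$ is now absorbed into the left coefficient of $\varPi$, producing the shift in the $(2,2)$ block recorded in $\mathcal{M}_b$; all remaining steps---identification of the Hamiltonian-type matrix, invariant-subspace correspondence, invertibility of $X$, and block triangularization---transfer verbatim.
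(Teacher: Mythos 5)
Your core mechanism coincides with the paper's: the similarity transformation by $\mathcal{L}=\begin{bmatrix} I & 0\\ \varPi & I\end{bmatrix}$ (your $T$), the cancellation of the $(2,1)$ block via the Riccati identity, and the identification of the two diagonal blocks $\hat{A}$ and $-\check{A}$ of the resulting triangular form with the closed-loop matrices whose spectra encode the $(n,n)$ $c$-splitting. The paper, following Theorem 18 of \cite{Huang-Zhou-20}, essentially stops there: it reads ``stabilizing solution'' of the non-symmetric equation (\ref{Pi}) as requiring \emph{both} $\hat{A}$ and $\check{A}$ to be Hurwitz, so that necessity is immediate from the triangular form, and sufficiency is the (tersely stated) graph-subspace construction $\varPi=YX^{-1}$ from the stable invariant subspace, which you spell out in more detail than the paper does.

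Two points in your write-up are genuine problems. First, you invoke {\bf (A6)} both to obtain invertibility of the upper block $X$ (sufficiency) and to confine the spectrum of $\mathcal{A}_2+\varPi\mathcal{S}$ to the right half-plane (necessity). That is circular: {\bf (A6)} asserts that (\ref{ARE}) --- including the $K$-equation, equivalently (\ref{Pi}) --- admits a stabilizing solution, which is exactly what this proposition is meant to characterize, and the proposition itself carries no stabilizability or detectability hypotheses you could fall back on. Second, your necessity argument takes ``stabilizing'' to mean only that the top-left closed-loop block is Hurwitz and then tries to recover the anti-stability of the other diagonal block by an unspecified ``duality/adjoint computation''; under that weaker reading the claimed equivalence need not hold (the second block could also have left-half-plane eigenvalues), and the step is left unproven. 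The resolution the paper uses implicitly is definitional: a stabilizing solution of the non-symmetric equation (\ref{Pi}) is one for which both $\hat{A}$ and $\check{A}$ are Hurwitz. With that convention the block-triangular form yields both directions at the level of the paper's own (admittedly terse) argument, and the only substantive point remaining in the sufficiency direction is the nonsingularity of $X$, which neither your contradiction sketch nor the paper actually establishes.
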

    \begin{proof}
        We only prove the first part of the proposition, as the second part can be proved similarly.
        The proof closely mirrors that of Theorem 18 in \cite{Huang-Zhou-20}.
        Let
        \begin{equation*}
        \begin{aligned}
            &\mathcal{L} := \begin{bmatrix}
                I & 0 \\ \varPi & I
            \end{bmatrix},\quad 
            \hat{A} := A+G-B\tilde{\varUpsilon}^{-1}\left(B^\top\varPi+D^\top P(C+F)\right)+k(C_0+F_0),\\
            &\check{A} := A^\top - \left(\varPi B+C^\top PD\right)\tilde{\varUpsilon}^{-1}B^\top.
        \end{aligned}
        \end{equation*}
        It can be verified that
        \begin{equation*}
            \mathcal{L}^{-1}\varPi\mathcal{L} := \begin{bmatrix}
                \hat{A} & -B\tilde{\varUpsilon}^{-1}B^\top \\ 0 & -\check{A}
            \end{bmatrix}.
        \end{equation*}
        If (\ref{Pi}) admits a stabilizing solution $\varPi(\cdot)$, then
        $\hat{A}$ and $\check{A}$ are both Hurwitz, meaning all eigenvalues of $\hat{A}$ and $\check{A}$ have negative real parts.
        Consequently, the matrix $\mathcal{M}_a$ is $(n,n)\ c$-splitting.
        Conversely, if $\mathcal{M}_a$ is $(n,n)\ c$-splitting, then $\hat{A}$ and $\check{A}$ both have $n$ eigenvalues in the open left half-plane, which implies that $\hat{A}$ and $\check{A}$ are both Hurwitz.
        Hence, (\ref{Pi}) admits a stabilizing solution $\varPi(\cdot)$.
    \end{proof}

    Similar to the finite-horizon case, we have the following result.
    \begin{mythm}\label{NEinfinite}
        Assume that (A1), (A2) and (A6) hold. For problem {\bf (PD')}, $( \hat{u}_1(\cdot), \cdots, \hat{u}_N(\cdot))$ given in (\ref{hatui'}) constitutes an $\epsilon$-Nash equilibrium, where $\epsilon = O\left(\frac{1}{\sqrt{N}}\right)$.
    \end{mythm}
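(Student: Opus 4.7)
The plan is to mirror the proof strategy of Theorem \ref{NE}, replacing the Riccati differential equations by their algebraic counterparts and using the stabilizing property in (A6) in place of the finite-horizon estimates. First, I would invoke an infinite-horizon analogue of Lemma A.2 of \cite{Wang-Huang-19} to show that if $\bar{J}^N_i(u_i(\cdot), \hat{u}_{-i}(\cdot))\le c_1$, then there exist $N_0$ and $c>0$ such that for $N\ge N_0$,
\begin{equation*}
    \mathbb{E}\int_0^\infty \big(\|u_i(t)\|^2+\|x_i(t)\|^2\big)dt<c,
\end{equation*}
so that $\tilde{u}_i:=u_i-\hat{u}_i$ and $\tilde{x}_i:=x_i-\hat{x}_i$ are square-integrable over $[0,\infty)$. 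Setting $\tilde{x}^{(N)}:=\frac{1}{N}\sum_{j=1}^N\tilde{x}_j$, the perturbed state equations for $\tilde{x}_i,\tilde{x}_j\,(j\ne i),\tilde{x}^{(N)}$ take the same form as in the finite-horizon case, so the cost difference decomposes as
\begin{equation*}
    \bar{J}^N_i(u_i(\cdot),\hat{u}_{-i}(\cdot))-\bar{J}^N_i(\hat{u}_i(\cdot),\hat{u}_{-i}(\cdot))=\tilde{J}^N_i(\tilde{u}_i(\cdot),\hat{u}_{-i}(\cdot))+\mathcal{I}^N_i,
\end{equation*}
where $\tilde{J}^N_i\ge 0$ by the convexity built into $Q\ge 0$, $R>0$, and $\mathcal{I}^N_i$ is the analogous cross term evaluated over $[0,\infty)$.

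Second, I would apply It\^o's formula to $\langle\tilde{x}_i(\cdot),\bar{p}_i^i(\cdot)\rangle$ on $[0,T]$, let $T\to\infty$, and use the stabilizing property of $(P,K,\varphi,\psi,\bar{x})$ to kill the terminal term: under (A6), $\bar{p}_i^i = P\bar{x}_i+K\bar{x}+\varphi$ is square-integrable, and $\tilde{x}_i\to 0$ in mean-square in a weak sense combined with Cauchy--Schwarz implies $\mathbb{E}[\tilde{x}_i^\top(T)\bar{p}_i^i(T)]\to 0$. This reduces $\mathcal{I}^N_i$ to the integral expression
\begin{equation*}
    \mathcal{I}^N_i=2\mathbb{E}\int_0^\infty\!\!\Big[\tilde{x}^{(N)\top}\big(G^\top\bar{p}_i^i+F^\top\bar{q}_i^{i,i}+F_0^\top\bar{q}_i^{i,0}-\varGamma^\top Q(\hat{x}_i-\varGamma\hat{x}^{(N)}-\eta)\big)+\tilde{x}_i^\top Q\big((\hat{x}_i-\bar{x}_i)-\varGamma(\hat{x}^{(N)}-\bar{x})\big)+\tilde{u}_i^\top R(\hat{u}_i-\bar{u}_i)\Big]dt.
\end{equation*}

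Third, the crux is to bound $\tilde{x}^{(N)}$ and $\hat{x}^{(N)}-\bar{x}$ uniformly on $[0,\infty)$. By (A6), the closed-loop matrices $A+G-B\bar{\varUpsilon}^{-1}(\varPhi+\varPsi)$ and $C_0+F_0-D_0\bar{\varUpsilon}^{-1}(\varPhi+\varPsi)$ generate a mean-square stable system, so an exponential semigroup estimate replaces the bounded-$t$ estimate $e^{(A+G)(t-\tau)}$ used in the finite-horizon proof. The perturbation SDE for $\tilde{x}^{(N)}$ has drift/diffusion inputs of order $1/N$, so, using the exponential decay together with the $L^2$-bound on $\tilde{u}_i$, I expect
\begin{equation*}
    \sup_{t\ge 0}\mathbb{E}\|\tilde{x}^{(N)}(t)\|^2=O\!\left(\tfrac{1}{N^2}\right),\qquad \sup_{t\ge 0}\mathbb{E}\|\hat{x}^{(N)}(t)-\bar{x}(t)\|^2=O\!\left(\tfrac{1}{N}\right),
\end{equation*}
the latter following by combining the i.i.d.\ initial-data concentration $\mathbb{E}\|\frac{1}{N}\sum_j\xi_j-\bar{\xi}\|^2=O(1/N)$ with an exponential Gronwall argument in the stable closed loop.

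Finally, inserting these bounds into the expression for $\mathcal{I}^N_i$ and applying Cauchy--Schwarz in the time variable, together with the $L^2(0,\infty)$-integrability of $\bar{p}_i^i,\bar{q}_i^{i,i},\bar{q}_i^{i,0},\hat{x}_i,\hat{u}_i,\eta,\sigma,\sigma_0,f$ provided by (A6), yields $\mathcal{I}^N_i=O(1/\sqrt{N})$, completing the proof. The main obstacle will be the replacement of the simple Gronwall bound by an infinite-horizon stability estimate: one has to make sure that every quantity entering the cross term is in $L^2(\Omega\times[0,\infty))$, which is exactly where assumption (A6) and the structure of the stabilizing solution to the ARE system are essential. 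Once integrability is secured, the rate $O(1/\sqrt{N})$ follows from the law-of-large-numbers type estimates in the last step.
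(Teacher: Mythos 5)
Your proposal is correct and follows exactly the route the paper intends: the paper omits the proof of Theorem \ref{NEinfinite} entirely, stating only that it is similar to that of Theorem \ref{NE}, and your sketch is precisely that finite-horizon argument adapted in the standard way (stabilizing ARE solutions replacing the Riccati ODEs, mean-square stability and $L^2(0,\infty)$-integrability under (A6) replacing the Gronwall/terminal-time estimates). You have in fact supplied more detail than the paper itself provides.
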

    The proof is similar to that of Theorem \ref{NE}, hence it is omitted.

    \section{Numerical examples}

    In this section, a numerical simulation of Problem {\bf (PD)} is given to verify the conclusions we obtained in the previous section.
    The simulation parameters are listed as follows:
    \begin{align*}
        A& =
        \begin{bmatrix}
            0 & 1 \\
            -0.05 & -0.1
        \end{bmatrix},\quad
        B =
        \begin{bmatrix}
            2.5 \\
            1.6
        \end{bmatrix},\quad
        G =
        \begin{bmatrix}
            0.3 & 0.1 \\
            0.1 & 0.3
        \end{bmatrix},\quad
        C = C_0 =
        \begin{bmatrix}
            0 & -0.8 \\
            0.8 & 0.4
        \end{bmatrix},\\
        D& = D_0 =
        \begin{bmatrix}
            0.15 \\
            0.05
        \end{bmatrix},\quad
        F = F_0
        \begin{bmatrix}
            0.4 & 0.2 \\
            0.2 & 0.4
        \end{bmatrix},\quad
        \Gamma =
        \begin{bmatrix}
            0.5 & 0 \\
            0 & 0.5
        \end{bmatrix},\quad
        \Gamma_0 =
        \begin{bmatrix}
            0.2 & 0 \\
            0 & 0.2
        \end{bmatrix},\\
        Q& =
        \begin{bmatrix}
            2 & 1 \\
            1 & 2
        \end{bmatrix},\quad
        R = 0.3,\quad
        H =
        \begin{bmatrix}
            0.5 & 0 \\
            0 & 0.5
        \end{bmatrix},\quad
        f =
        \begin{bmatrix}
            0.05t \\
            0.05t^{\frac{3}{2}}
        \end{bmatrix},\\
        \sigma& =
        \begin{bmatrix}
            0.05\sin t \\
            0.05t^{\frac{1}{2}}
        \end{bmatrix},\quad
        \sigma_0 =
        \begin{bmatrix}
            0.05t \\
            0.05t^{\frac{1}{2}}
        \end{bmatrix},\quad
        \eta =
        \begin{bmatrix}
            0.1 t \\
            0.1t^{\frac{1}{2}}
        \end{bmatrix},\quad
        \eta_0 =
        \begin{bmatrix}
            0.5 \\
            0.5
        \end{bmatrix}
    \end{align*}
    Consider a two-dimensional system with 50 agents.
    The time interval is set to $[0,10]$.
    Each agent's initial state is drawn independently and identically: the first component is uniformly distributed on $[0,5]$, and the second on $[0,2]$.
    The curves of Riccati equations $P(\cdot)$ and $K(\cdot)$, described by (\ref{P}), and (\ref{K}), are shown in Figure \ref{PK_TwoDimension}.
    \begin{figure}[H]
        \centering\includegraphics[width=10cm]{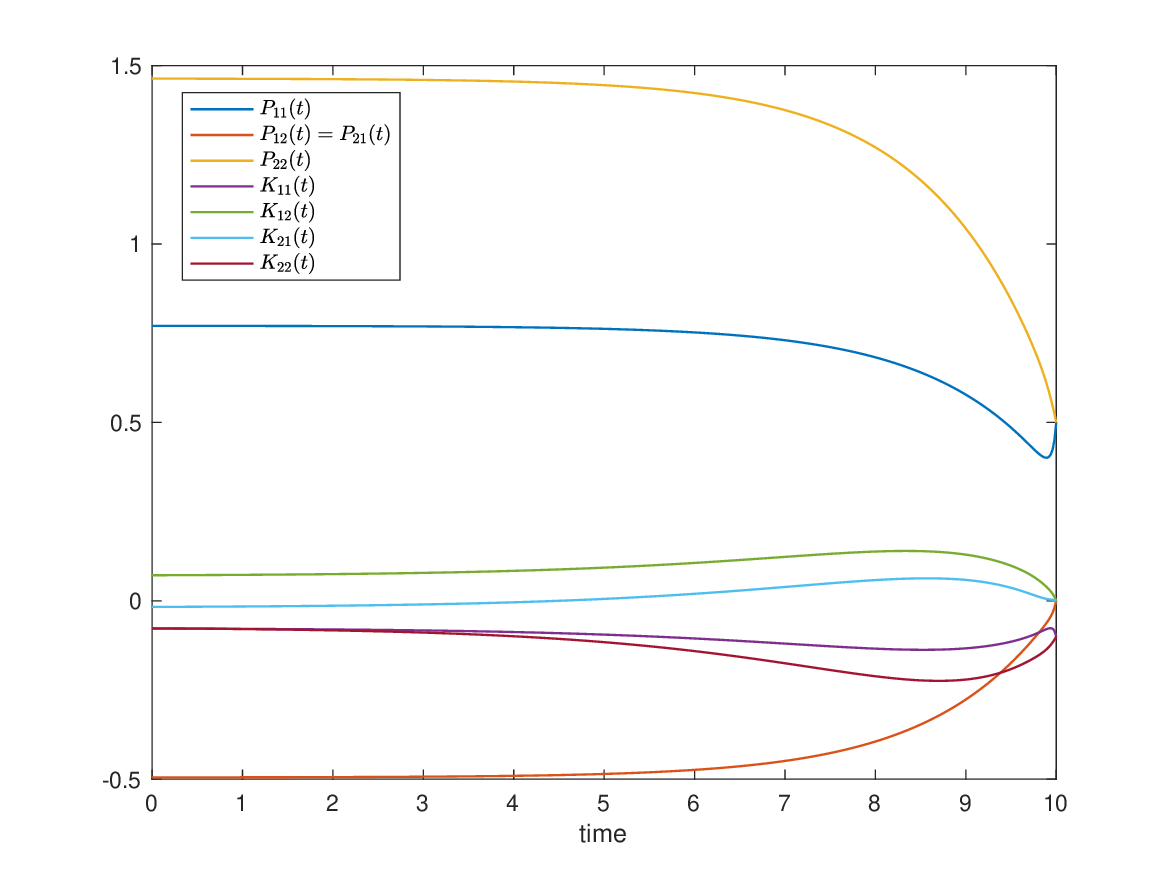}
        \caption{The curves of $P(\cdot)$ and $K(\cdot)$}
        \label{PK_TwoDimension}
    \end{figure}

    We denote the performance of the decentralized strategy by $\epsilon(N) = \left(\mathbb{E}\int_{0}^{T}||\hat{x}^{(N)} - \bar{x}||^2 dt\right)^{\frac{1}{2}}$.
    The graph of $\epsilon(N)$ versus $N$ is shown in Figure \ref{epsilon_TwoDimension}.
    It can be seen that as $N$ increases, $\epsilon(N)$ decreases significantly, which confirms the consistency of the mean-field approximation.
    \begin{figure}[H]
        \centering\includegraphics[width=10cm]{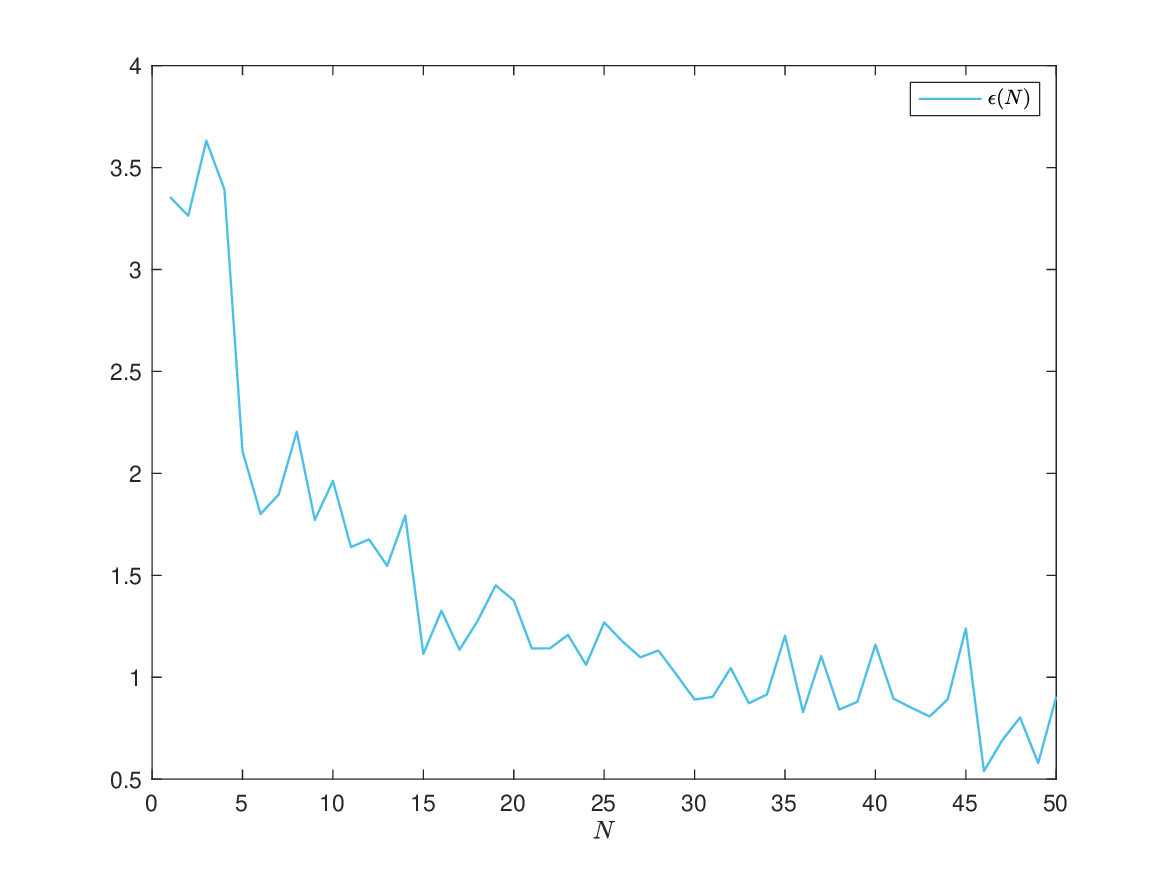}
        \caption{$\epsilon(N)$ with respect to $N$}
        \label{epsilon_TwoDimension}
    \end{figure}

    Under strategy (\ref{hatui}), the trajectories of the 50 agents are shown separately for each state dimension: Figure \ref{50trajectory_FirstDimension} displays the first component, and Figure \ref{50trajectory_SecondDimension} the second, together with the average state $\hat{x}^{(N)}(\cdot)$ and mean-field approximation $\bar{x}(\cdot)$.
    \begin{figure}[H]
        \centering\includegraphics[width=10cm]{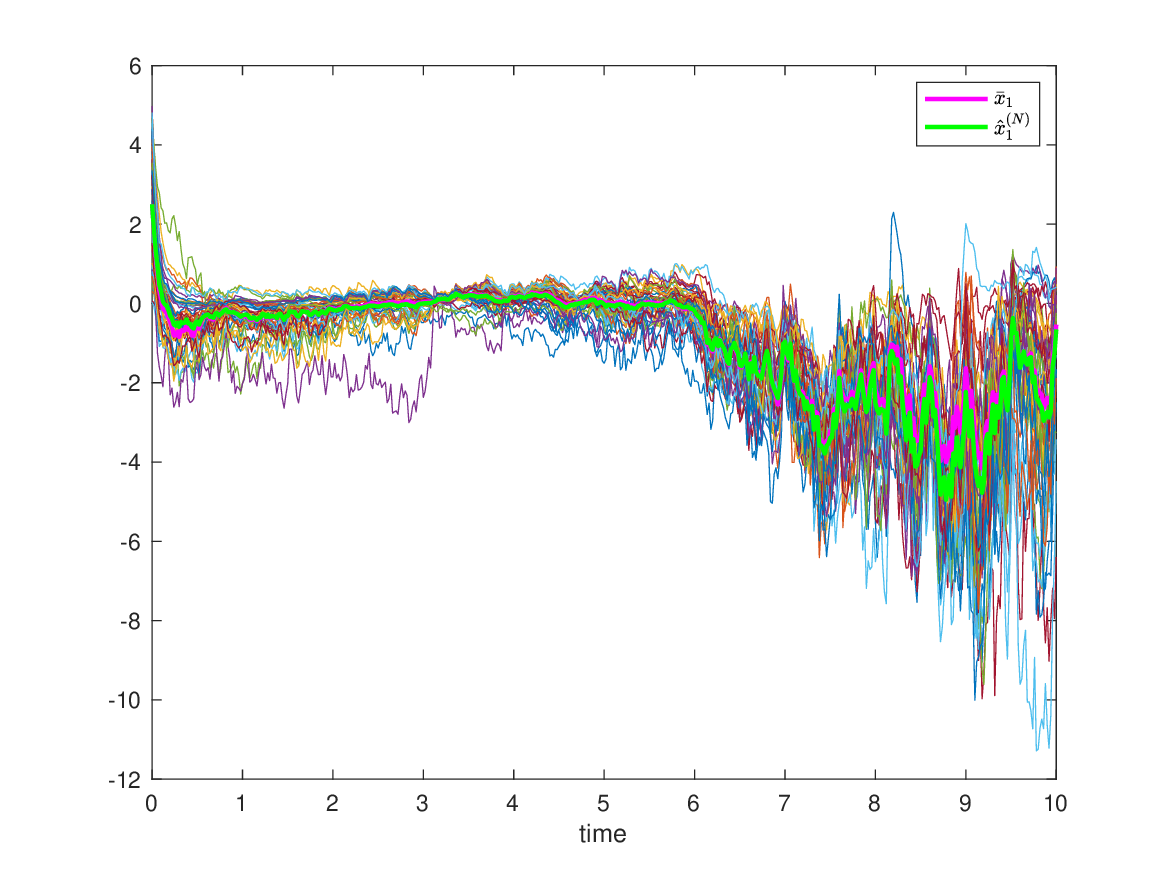}
        \caption{Trajectories of the 50 agents' first components, together with their average $\hat x^{(N)}(\cdot)$ and the mean-field approximation $\bar x(\cdot)$}
        \label{50trajectory_FirstDimension}
    \end{figure}
    \begin{figure}[H]
        \centering\includegraphics[width=10cm]{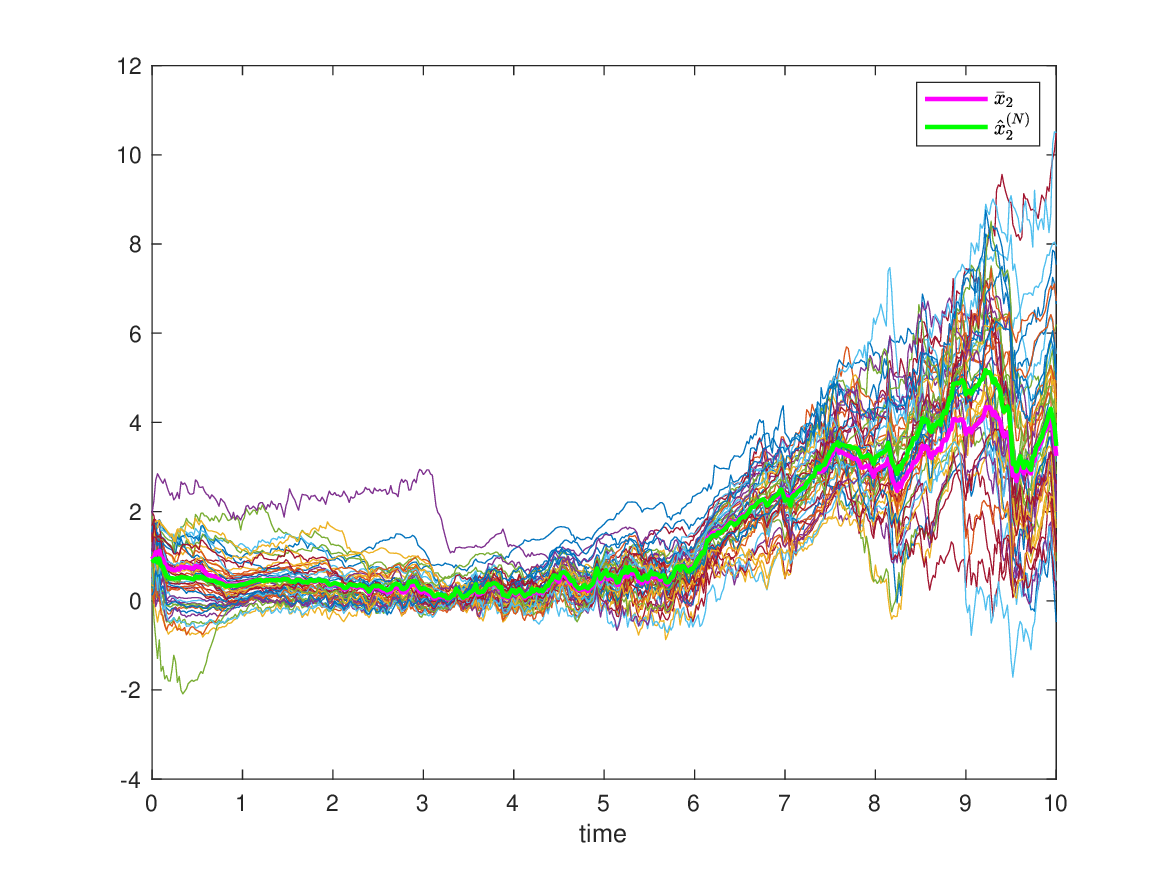}
        \caption{Trajectories of the 50 agents' second components, together with their average $\hat x^{(N)}(\cdot)$ and the mean-field approximation $\bar x(\cdot)$}
        \label{50trajectory_SecondDimension}
    \end{figure}
    We can observe that the trajectories of the 50 agents reveal similar trends, and their average state remains very close to the mean-field approximation.

    \section{Conclusions}

    In this paper, we explore a more general LQ-MFGs problem complicated by multiplicative and common noises.
    Using the direct approach, we effectively address computational complexity issues arising from the increase in adjoint equations due to mean-field coupling.
    We design decentralized control laws for both finite-horizon and infinite-horizon scenarios, proving them to be $\epsilon$-Nash equilibria.
    In future work, we will explore LQ Stackelberg MFGs problem with multiplicative and common noises, and analyze their practical applications.

\end{document}